\providecommand{\hpalma}{\mathcal{H}}
\begin{document}

\title*{Quasi-symmetric group algebras and $C^*$-completions of Hecke algebras}
\author{Rui Palma}
\institute{\emph{Date:} \today\\
Rui Palma \at University of Oslo, P.O. Box 1053, NO-0316 Oslo, Norway, \email{ruip@math.uio.no}\\
Research supported by the Research Council of Norway and the Nordforsk research network ``Operator Algebra and Dynamics''.}

%
%
\maketitle

\abstract{We show that for a Hecke pair $(G, \Gamma)$ the $C^*$-completions $C^*(L^1(G, \Gamma))$ and $pC^*(\overline{G})p$ of its Hecke algebra coincide whenever the group algebra $L^1(\overline{G})$ satisfies a spectral property which we call ``quasi-symmetry'', a property that is satisfied by all Hermitian groups and all groups with subexponential growth. We generalize in this way a result of Kaliszewski, Landstad and Quigg \cite{schl palma}.
Combining this result with our earlier results in \cite{palma} and a theorem of Tzanev \cite{tzanev palma} we establish that the full Hecke $C^*$-algebra exists and coincides with the reduced one for several classes of Hecke pairs, particularly all Hecke pairs $(G, \Gamma)$ where $G$ is nilpotent group. As a consequence, the category equivalence studied by Hall \cite{hall palma} holds for all such Hecke pairs. We also show that the completions $C^*(L^1(G, \Gamma))$ and $pC^*(\overline{G})p$ do not always coincide, with the Hecke pair $(SL_2(\mathbb{Q}_q), SL_2(\mathbb{Z}_q))$ providing one such example.}

\section{Introduction}

A \emph{Hecke pair} $(G, \Gamma)$ consists of a group $G$ and a subgroup $\Gamma \subseteq G$, called a \emph{Hecke subgroup}, for which every double coset $\Gamma g \Gamma$ is the union of finitely many left cosets.  Examples of Hecke subgroups include finite subgroups, finite-index subgroups and normal subgroups. It is many times insightful to think of Hecke subgroups as subgroups which are ``almost normal''. The \emph{Hecke algebra} $\hpalma(G, \Gamma)$ of  a Hecke pair $(G,\Gamma)$ is a $^*$-algebra of complex-valued functions over the set of double cosets $\Gamma \backslash G / \Gamma$, with suitable convolution product and involution. It generalizes the notion of the group algebra $\mathbb{C}(G / \Gamma)$ of the quotient group when $\Gamma$ is a normal subgroup.

For operator algebraists the interest in the subject of Hecke algebras was largely raised by the work of Bost and Connes \cite{bost connes palma} on phase transitions in number theory and their work has led several authors to study $C^*$-algebras which arise as completions of Hecke algebras. There are several canonical $C^*$-completions of a Hecke algebra $\hpalma(G, \Gamma)$ which one can consider: $C^*(G, \Gamma)$, $C^*(L^1(G, \Gamma))$, $pC^*(\overline{G})p$ and $C^*_r(G, \Gamma)$ (see \cite{tzanev palma} and \cite{schl palma}), and the question of when does $C^*(G, \Gamma)$ exist and when do some of these completions coincide has been studied by several authors (\cite{bost connes palma}, \cite{hall palma}, \cite{tzanev palma}, \cite{schl palma}, \cite{palma}, to name a few).

 An important question raised by Hall \cite{hall palma} where $C^*$-completions of Hecke algebras came to play an important role was if for a Hecke pair $(G, \Gamma)$ there is a correspondence between unitary representations of $G$ generated by the $\Gamma$-fixed vectors and nondegenerate $^*$-representations of $\hpalma(G, \Gamma)$, analogous to the known correspondence between representations of a group and of its group algebra. Whenever such a correspondence holds we say that $(G, \Gamma)$ satisfies \emph{Hall's equivalence}. It is known that Hall's equivalence does not hold in general \cite{hall palma}, and in fact a theorem of Kaliszewski, Landstad and Quigg \cite{schl palma} shows that Hall's equivalence holds precisely when $C^*(G, \Gamma)$ exists and $C^*(G, \Gamma) \cong C^*(L^1(G, \Gamma)) \cong pC^*(\overline{G})p$, which has been shown to be the case for several classes of Hecke pairs.

The primary goal of this article is to give a sufficient condition for the isomorphism $C^*(L^1(G, \Gamma)) \cong pC^*(\overline{G})p$ to hold and to combine this result with the results of \cite{palma} in order to establish Hall's equivalence for  several classes of Hecke pairs, including all Hecke pairs $(G, \Gamma)$ where $G$ is a nilpotent group. We will also show that the two $C^*$-completions $C^*(L^1(G, \Gamma))$ and $pC^*(\overline{G})p$ are in general different, with $(SL_2(\mathbb{Q}_q), SL_2(\mathbb{Z}_q))$ providing an example for which $C^*(L^1(G, \Gamma)) \ncong pC^*(\overline{G})p$.

The problem of deciding for which Hecke pairs the two completions $C^*(L^1(G, \Gamma))$ and $pC^*(\overline{G})p$ coincide is only partially understood. Several properties of the pair $(G, \Gamma)$ are known to force these two completions to coincide, and in this regard we recall a result by Kaliszewski, Landstad and Quigg \cite{schl palma} which states that $C^*(L^1(G, \Gamma)) \cong pC^*(\overline{G})p$ whenever the Schlichting completion $\overline{G}$ is a Hermitian group. We will generalize their result in Section \ref{quasi symmetric group algebras section palma} in a way that covers also all Hecke pairs for which $G$ or $\overline{G}$ has subexponential growth. For that we introduce the notion of a \emph{quasi-symmetric} group algebra: a locally compact group $G$ will be said to have a quasi-symmetric group algebra if for any $f \in C_c(G)$ the spectrum of $f^**f$ relative to $L^1(G)$ is in $\mathbb{R}^+_0$. It follows directly from the definition that Hermitian groups have a quasi-symmetric group algebra and it is a consequence of the work of Hulanicki (\cite{hul2 palma}, \cite{hul palma}) that this is also the case for groups of subexponential growth. We show that $C^*(L^1(G, \Gamma)) \cong pC^*(\overline{G})p$ whenever the Schlichting completion $\overline{G}$ has a quasi-symmetric group algebra.

Besides strictly generalizing Kaliszewski, Landstad and Quigg's result, as there are groups of subexponential growth which are not Hermitian, our result is easier to apply in practice since we can many times use it without any knowledge about the Schlichting completion $\overline{G}$, which is often hard to compute. In fact we will show that if $G$ has subexponential growth then so does $\overline{G}$, which means that knowledge about the original group $G$ is sufficient for applying our result. The relation between Hermitianess and subexponential growth will be discussed in Section \ref{further remarks quasi symmetry section palma}.

By combining our result on quasi-symmetric group algebras with the results of \cite{palma} and also a theorem of Tzanev \cite{tzanev palma}, we are able to establish in Section \ref{Halls equivalence section palma} that $C^*(G, \Gamma)$ exists and $C^*(G, \Gamma) \cong C^*(L^1(G, \Gamma)) \cong pC^*(\overline{G})p \cong C^*_r(G, \Gamma)$ for several classes of Hecke pairs, including all Hecke pairs $(G, \Gamma)$ where $G$ is a nilpotent group. Consequently, it follows that Hall's equivalence holds for all such classes of Hecke pairs.

It is natural to ask if there are examples of Hecke pairs for which we have $C^*(L^1(G, \Gamma)) \ncong  pC^*(\overline{G})p$.  According to \cite{schl palma}, Tzanev claims in private communication with Kaliszewski, Landstad and Quigg that the Hecke pair $(PSL_3(\mathbb{Q}_q), PSL_3(\mathbb{Z}_q))$ is such that $C^*(L^1(G, \Gamma)) \ncong  pC^*(\overline{G})p$, but no proof has been published and no other example seems to be known, as far as we know. We prove in Section \ref{counter-example section palma} that $C^*(L^1(G, \Gamma)) \ncong  pC^*(\overline{G})p$ for the Hecke pair $(PSL_2(\mathbb{Q}_q), PSL_2(\mathbb{Z}_q))$, as suggested by Kaliszewski, Landstad and Quigg in \cite{schl palma}, but following a different approach than the one they suggest which  does not use the representation theory of $PSL_2(\mathbb{Q}_q)$.

The author is thankful to his adviser Nadia Larsen for the very helpful discussions, suggestions and comments during the elaboration of this work.

\section{Preliminaries}
\label{preliminaries section palma}

\subsection{Hecke pairs and Hecke algebras}

We will mostly follow \cite{krieg palma} and \cite{schl palma} in what regards Hecke pairs and Hecke algebras and refer to these references for more details.

\begin{definition}
 Let $G$ be a group and $\Gamma$ a subgroup. The pair $(G , \Gamma)$ is called a \emph{Hecke pair} if every double coset $\Gamma g\Gamma$ is the union of finitely many right (and left) cosets. In this case, $\Gamma$ will be called a \emph{Hecke subgroup} of $G$.
\end{definition}

Given a Hecke pair $(G, \Gamma)$ we will denote by $L$ and $R$, respectively, the left and right coset counting functions, i.e.
\begin{align*}
 L(g):= |\Gamma g \Gamma / \Gamma|   < \infty \qquad\quad \text{and} \qquad\quad R(g) :=|\Gamma \backslash \Gamma g \Gamma|  < \infty\,.
\end{align*}
We recall that $L$ and $R$ are $\Gamma$-biinvariant functions which satisfy $L(g) = R(g^{-1})$ for all $g \in G$. Moreover, the function $\Delta: G \to \mathbb{Q^+}$ given by
\begin{equation*}
\label{def modular function Hecke algebra palma}
 \Delta(g) := \frac{L(g)}{R(g)}\,,
\end{equation*}
is a group homomorphism, usually called the \emph{modular function} of $(G, \Gamma)$.

\begin{definition}
 The \emph{Hecke algebra} $\hpalma(G, \Gamma)$ is the $^*$-algebra of finitely supported $\mathbb{C}$-valued functions on the double coset space $\Gamma \backslash G / \Gamma$ with the product and involution defined by
\begin{eqnarray*}
 (f_1*f_2)(\Gamma g \Gamma) & &:= \sum_{h\Gamma \in G / \Gamma} f_1(\Gamma h \Gamma)f_2(\Gamma h^{-1}g\Gamma)\,,\\
f^*(\Gamma g\Gamma) & &:= \Delta(g^{-1}) \overline{f(\Gamma g^{-1} \Gamma)}\,.\\
\end{eqnarray*}
\end{definition}

\begin{remark}
Some authors, including Krieg \cite{krieg palma}, do not include the factor $\Delta$ in the involution. Here we adopt the convention of Kaliszewski, Landstad and Quigg \cite{schl palma} in doing so, as it gives rise to a more natural $L^1$-norm. We note, nevertheless, that there is no loss (or gain) in doing so, because these two different involutions give rise to $^*$-isomorphic Hecke algebras.
\end{remark}

Given a Hecke pair $(G, \Gamma)$, the subgroup $R^\Gamma := \bigcap_{g \in G} g \Gamma g^{-1}$ is a normal subgroup of $G$ contained in $\Gamma$. A Hecke pair $(G, \Gamma)$ is called \emph{reduced} if $R^\Gamma = \{e\}$. As it is known, the pair $(G_r, \Gamma_r):= (G / R^\Gamma, \Gamma / R^\Gamma)$ is a reduced Hecke pair and the Hecke algebras $\hpalma(G, \Gamma) \cong \hpalma(G_r, \Gamma_r)$ are canonically isomorphic. For this reason the pair $(G_r, \Gamma_r)$ is called the \emph{reduction} of $(G, \Gamma)$, and the isomorphism of the corresponding Hecke algebras shows that it is enough to consider reduced Hecke pairs, a convention used by several authors. We will not use this convention however, since we aim at achieving general results based on properties of the original Hecke pair $(G, \Gamma)$, and not its reduction.

A natural example of a Hecke pair $(G, \Gamma)$ is given by a topological group $G$ and a compact open subgroup $\Gamma$. It is known that this type of examples are, in some sense, the general case: there is a canonical construction which associates to a given reduced Hecke pair $(G, \Gamma)$ a new Hecke pair $(\overline{G},\overline{\Gamma})$ with the following properties:
\begin{enumerate}
 \item $\overline{G}$ is a totally disconnected locally compact group;
 \item $\overline{\Gamma}$ is a compact open subgroup;
 \item the pair $(\overline{G}, \overline{\Gamma})$ is reduced;
 \item There is a canonical embedding $\theta: G \to \overline{G}$ such that $\theta(G)$ is dense in $\overline{G}$ and $\theta(\Gamma)$ is dense in $\overline{\Gamma}$. Moreover, $\theta^{-1}(\overline{\Gamma}) = \Gamma$;
\end{enumerate}

The pair $(\overline{G}, \overline{\Gamma})$ satisfies a well-known uniqueness property and is called the \emph{Schlichting completion} of $(G, \Gamma)$. For the details of this construction the reader is referred to \cite{tzanev palma} and \cite{schl palma}  (see also \cite{glockner palma} for a slightly different approach). We shall make a quick review of some known facts and we refer to the previous references for all the details.

Henceforward we will not write explicitly the canonical homomorphism $\theta$, and we will instead see $G$ as a dense subgroup of $\overline{G}$, identified with the image $\theta(G)$. The Schlichting completion $(\overline{G}, \overline{\Gamma})$ of a reduced Hecke pair $(G, \Gamma)$ satisfies the following additional property:

\begin{itemize}
 \item[5.] there are canonical bijections $G / \Gamma \to \overline{G} / \overline{\Gamma}$ and $ \Gamma \backslash G / \Gamma \to \overline{\Gamma} \backslash \overline{G} / \overline{\Gamma}$ given respectively by $g \Gamma \to g \overline{\Gamma}$ and $\Gamma g \Gamma \to \overline{\Gamma} g \overline{\Gamma}$.
\end{itemize}

If a Hecke pair $(G, \Gamma)$ is not reduced, its \emph{Schlichting completion} $(\overline{G}, \overline{\Gamma})$ is defined as the completion $(\overline{G_r}, \overline{\Gamma_r})$ of its reduction. There is then a canonical map with dense image $G \to \overline{G}$ which factors through $G_r$, and this map is an embedding if and only if $(G, \Gamma)$ is reduced, i.e. $G \cong G_r$.

Following \cite{schl palma}, we consider the normalized Haar measure $\mu$ on $\overline{G}$ (so that $\mu(\overline{\Gamma}) = 1$) and define the Banach $^*$-algebra $L^1(\overline{G})$ with the usual convolution product and involution. We denote by $p$ the characteristic function of $\overline{\Gamma}$, i.e. $p:= \chi_{\overline{\Gamma}}$, which is a projection in $C_c(\overline{G}) \subseteq L^1(\overline{G})$. Recalling \cite{tzanev palma} or \cite{schl palma}, we always have canonical $^*$-isomorphisms:
\begin{equation}
\label{several isomorophisms algebraic hecke algebra palma}
 \hpalma(G, \Gamma) \cong \hpalma(G_r, \Gamma_r) \cong \hpalma(\overline{G}, \overline{\Gamma}) \cong pC_c(\overline{G})p\,.
\end{equation}
The modular function $\Delta$ of a reduced Hecke pair $(G, \Gamma)$, defined by (\ref{def modular function Hecke algebra palma}), is simply the modular function of the group $\overline{G}$ restricted to $G$.

\subsection{$L^1$- and $C^*$-completions}

There are several ways of defining a $L^1$-norm in a Hecke algebra. One approach is to simply take the $L^1$-norm from $L^1(\overline{G})$, since the isomorphism in (\ref{several isomorophisms algebraic hecke algebra palma}) enables us to see the Hecke algebra as a subalgebra of $L^1(\overline{G})$. The completion of $\hpalma(G, \Gamma)$ with respect to this $L^1$-norm is isomorphic to the corner $pL^1(\overline{G})p$. Alternatively, one may take the following definition:

\begin{definition}
 The \emph{$L^1$-norm} on $\hpalma(G, \Gamma)$, denoted $\| \cdot \|_{L^1}$, is given by
\begin{equation*}
 \| f \|_{L^1} := \sum_{\Gamma g \Gamma \in \Gamma \backslash G / \Gamma} |f(\Gamma g \Gamma)|\, L(g)\,.
\end{equation*}
We will denote by $L^1(G, \Gamma)$ the completion of $\hpalma(G, \Gamma)$ under this norm.
\end{definition}

As observed in \cite{tzanev palma} or \cite{schl palma}, the two $L^1$-norms described above are the same. In fact we have canonical $^*$-isomorphisms
\begin{equation*}
\label{several isomorphisms L1 Hecke algebra palma}
 L^1(G, \Gamma) \cong L^1(\overline{G}, \overline{\Gamma}) \cong pL^1(\overline{G})p\,.
\end{equation*}

There are several canonical $C^*$-completions of $\hpalma(G, \Gamma)$. These are:

\begin{itemize}
 \item $C^*_r(G, \Gamma)$ - Called the \emph{reduced Hecke $C^*$-algebra}, it is the completion of $\hpalma(G, \Gamma)$ under the $C^*$-norm arising from a left regular representation (see \cite{tzanev palma}).
 \item $pC^*(\overline{G})p$ - The corner of the full group $C^*$-algebra $C^*(\overline{G})$. 
\item $C^*(L^1(G, \Gamma))$ - The enveloping $C^*$-algebra of $L^1(G, \Gamma)$.
\item $C^*(G, \Gamma)$ - The enveloping $C^*$-algebra (if it exists!) of $\hpalma(G, \Gamma)$. When it exists, it is usually called the \emph{full Hecke $C^*$-algebra}.
\end{itemize}

The various $C^*$-completions of $\hpalma(G, \Gamma)$ are related in the following way, through canonical surjective maps:
\begin{equation*}
 C^*(G,\Gamma)  \dashrightarrow C^*(L^1(G, \Gamma)) \longrightarrow pC^*(\overline{G})p \longrightarrow C^*_r(G, \Gamma)\,.
\end{equation*}

As was pointed out by Hall in \cite[Proposition 2.21]{hall palma}, the full Hecke $C^*$-algebra $C^*(G, \Gamma)$ does not have to exist in general. Nevertheless, its existence has been established for several classes of Hecke pairs (see, for example, \cite{schl palma}, \cite{hall palma} or \cite{palma}).

The question of whether some of these completions are actually the same has also been explored in the literature (\cite{bost connes palma}, \cite{schl palma}, \cite{tzanev palma}, \cite{palma}). We review here some of the main results.

The question of when one has the isomorphism $pC^*(\overline{G})p \cong C^*_r(G, \Gamma)$ was clarified by Tzanev, in \cite[Proposition 5.1]{tzanev palma}, to be a matter of amenability. As pointed out in \cite{schl palma}, there was a mistake in Tzanev's article (where it is assumed without proof that $C^*(L^1(G, \Gamma)) \cong pC^*(\overline{G})p$ is always true) which carries over to the cited Proposition 5.1. Nevertheless, Tzanev's proof holds if one just replaces $C^*(L^1(G, \Gamma))$ with $pC^*(\overline{G})p$, so that the correct statement of (a part of) his result becomes:

\begin{theorem}[Tzanev] 
\label{Tzanevs theorem palma}
$pC^*(\overline{G})p \cong C_r^*(G, \Gamma)$ if and only if $\overline{G}$ is amenable.
\end{theorem}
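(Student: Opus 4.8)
The plan is to transfer the statement to the group $C^*$-algebras of $\overline{G}$ and to read off both implications from Hulanicki's characterization of amenability together with a norm computation for the Hecke averaging operators. First I would record the key identification that turns the canonical surjection into a compression: realizing the left regular representation of $\hpalma(G,\Gamma)$ on $\ell^2(\overline{\Gamma}\backslash\overline{G}) \cong p L^2(\overline{G})$, one has $C^*_r(G,\Gamma) \cong p C^*_r(\overline{G}) p$ (this is how the reduced completion is set up in \cite{tzanev palma}), and under this identification the canonical map $p C^*(\overline{G}) p \to C^*_r(G,\Gamma)$ is exactly the restriction to the corner $p(\,\cdot\,)p$ of the regular representation $\lambda \colon C^*(\overline{G}) \to C^*_r(\overline{G})$. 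Since $\lambda$ is a surjective $^*$-homomorphism and a $^*$-homomorphism is injective if and only if it is isometric, the canonical corner map is an isomorphism if and only if $\|\lambda(pap)\| = \|pap\|_{C^*(\overline{G})}$ for every $a \in C^*(\overline{G})$.

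For the ``if'' direction, suppose $\overline{G}$ is amenable. Then by Hulanicki's theorem $\lambda$ is a $^*$-isomorphism $C^*(\overline{G}) \cong C^*_r(\overline{G})$, and compressing by the projection $p = \chi_{\overline{\Gamma}}$ immediately yields $p C^*(\overline{G}) p \cong p C^*_r(\overline{G}) p \cong C^*_r(G,\Gamma)$, as desired.

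For the ``only if'' direction I would argue by contraposition and exhibit a single element of the corner on which the two norms disagree. Fix a double coset $\overline{\Gamma} g \overline{\Gamma}$, which after symmetrizing we may take to satisfy $\overline{\Gamma} g \overline{\Gamma} = \overline{\Gamma} g^{-1}\overline{\Gamma}$, and set $a := \chi_{\overline{\Gamma} g \overline{\Gamma}}$; this is a self-adjoint positive element of $p C_c(\overline{G}) p$, and $\lambda(a)$ is, under the identification above, the associated Hecke (adjacency) operator on $\ell^2(\overline{\Gamma}\backslash\overline{G})$. Writing $n := L(g)$ for the number of cosets in the double coset, the trivial representation of $\overline{G}$ evaluates $a$ to $\int_{\overline{G}} a\, d\mu = n$, so $n \le \|a\|_{C^*(\overline{G})} \le \|a\|_{L^1} = n$, giving $\|a\|_{C^*(\overline{G})} = n$. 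On the other hand, because $\overline{\Gamma}$ is compact and hence amenable, amenability of $\overline{G}$ is equivalent to the trivial representation being weakly contained in the quasi-regular representation on $\ell^2(\overline{\Gamma}\backslash\overline{G})$; when $\overline{G}$ is \emph{not} amenable this fails, and a Kesten-type estimate shows that the spectral radius of the averaging operator is strictly smaller than its degree, i.e. $\|\lambda(a)\| < n$ for a suitable generating double coset. Thus the corner map is non-isometric and cannot be an isomorphism.

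The main obstacle is precisely this last step: converting ``$\overline{G}$ is non-amenable'' into the strict operator-norm gap $\|\lambda(a)\| < \|a\|_{C^*(\overline{G})}$ for an honest element $a$ of the corner. This requires (i) checking that weak containment of the trivial representation in $\ell^2(\overline{\Gamma}\backslash\overline{G})$ is genuinely equivalent to amenability of $\overline{G}$ --- here compactness of $\overline{\Gamma}$ is essential, since it makes the quasi-regular representation co-amenable with $\lambda_{\overline{G}}$ --- and (ii) ensuring the chosen double coset generates $\overline{G}$, so that the failure of weak containment is actually detected by $\lambda(a)$ rather than concealed in a subrepresentation. The remaining verifications ($\|a\|_{L^1}=n$, self-adjointness of $a$, and the compatibility of the compression with $\lambda$) are routine.
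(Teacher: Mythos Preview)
The paper does not supply its own proof of this theorem: it is quoted from Tzanev \cite[Proposition~5.1]{tzanev palma}, with the observation (recorded in \cite{schl palma}) that Tzanev's original statement should have $pC^*(\overline{G})p$ in place of $C^*(L^1(G,\Gamma))$, after which Tzanev's argument goes through unchanged. There is therefore nothing in the present paper to compare your argument against; what follows concerns only the soundness of your plan.

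Your ``if'' direction is correct, and the mechanism you propose for ``only if'' --- comparing the value of the trivial representation with the reduced norm on an element of the corner --- is the right one. The gap is exactly the point you flag as obstacle~(ii): for $a=\chi_{\overline{\Gamma}g\overline{\Gamma}}$ the Kesten inequality $\|\lambda(a)\|<L(g)$ is only available when $\overline{\Gamma}g\overline{\Gamma}\cup\overline{\Gamma}$ generates $\overline{G}$, and there is no reason this should hold --- $\overline{G}$ need not even be compactly generated, so no single double coset (nor any finite union) can be assumed to do the job. The remedy is not to fix $a$ in advance. If $\overline{G}$ is non-amenable, Hulanicki's criterion produces some $f\in C_c(\overline{G})$ with $f\geq 0$ and $\|\lambda(f)\|<\int f\,d\mu$. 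Put $a:=p*f*p\in pC_c(\overline{G})p$. Since $\mu(\overline{\Gamma})=1$ one has $\int a\,d\mu=\int f\,d\mu$, so evaluating the trivial representation gives $\|a\|_{C^*(\overline{G})}\geq\int f\,d\mu$; on the other hand $\|\lambda(a)\|=\|\lambda(p)\lambda(f)\lambda(p)\|\leq\|\lambda(f)\|<\int f\,d\mu$. This produces the desired norm gap inside the corner with no generation hypothesis, and together with your identification $C^*_r(G,\Gamma)\cong pC^*_r(\overline{G})p$ completes the argument.
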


A known result concerning the isomorphism $C^*(L^1(G, \Gamma)) \cong pC^*(\overline{G})p$ was obtained by Kaliszewski, Landstad and Quigg in \cite[Theorem 5.14]{schl palma}, where they showed that this isomorphism holds when $\overline{G}$ is a Hermitian group.

In \cite{palma} we established the existence of $C^*(G, \Gamma)$ and also the isomorphism $C^*(G, \Gamma) \cong C^*(L^1(G, \Gamma))$ for several classes of Hecke pairs, recovering also various results in the literature in a unified approach.

An important result of Kaliszewski, Landstad and Quigg regarding the existence of $C^*(G, \Gamma)$ and the simultaneous isomorphisms $C^*(G, \Gamma) \cong C^*(L^1(G, \Gamma)) \cong pC^*(\overline{G})p$ will be discussed in the next subsection.

\subsection{Representation theory}

As it is well-known, for any group $G$ there is a canonical bijective correspondence (i.e. category equivalence) between unitary representations of $G$ and nondegenerate $^*$-representations of the group algebra $\mathbb{C}(G)$. Hall \cite{hall palma} asked whether something analogous was true for Hecke pairs, and the following definition is necessary in order to understand Hall's question:

\begin{definition}
 Let $G$ be a group and $\Gamma \subseteq G$ a subgroup. A unitary representation $\pi:G \to U(\mathscr{H})$ is said to be \emph{generated by its $\Gamma$-fixed vectors} if $\overline{\pi(G) \mathscr{H}^{\Gamma}} = \mathscr{H}$, where $\mathscr{H}^{\Gamma} = \{ \xi \in \mathscr{H}: \pi(\gamma) \xi = \xi\,,\;\;\text{for all}\;\;\gamma \in \Gamma\}$.
\end{definition}

The question which Hall posed in \cite{hall palma} is the following:

\begin{question}[Hall's equivalence]
 Let $(G, \Gamma)$ be a Hecke pair. Is there a category equivalence between nondegenerate $^*$-representations of $\hpalma(G, \Gamma)$ and unitary representations of $G$ generated by the $\Gamma$-fixed vectors?
\end{question}

Whenever there is an affirmative answer to this question, we shall say the Hecke pair $(G, \Gamma)$ satisfies \emph{Hall's equivalence}. In the work of Hall \cite{hall palma} and the subsequent work of Gl\" ockner and Willis \cite{glockner palma}, Hall's equivalence was studied and proven to hold under a certain form of positivity for some $^*$-algebraic bimodules. A more complete approach was further developed by Kaliszewski, Landstad and Quigg in \cite{schl palma}, where Hall's equivalence, positivity for certain $^*$-algebraic bimodules, and $C^*$-completions of Hecke algebras were all shown to be related. We briefly describe here the approach and results of \cite{schl palma} and the reader is referred to this reference for more details.

Let $(\overline{G}, \overline{\Gamma})$ be the Schlichting completion of a Hecke pair $(G, \Gamma)$. Following \cite[Section 5]{schl palma}, we have an inclusion of two imprimitivity bimodules (in Fell's sense):
\begin{equation*}
 _{C_c(\overline{G})pC_c(\overline{G})} \big(C_c(\overline{G})p \big)_{\hpalma(\overline{G}, \overline{\Gamma})} \qquad \subseteq \qquad _{L^1(\overline{G})pL^1(\overline{G})} \big(L^1(\overline{G})p \big)_{L^1(\overline{G}, \overline{\Gamma})}\,,
\end{equation*}
where the left and right inner products, $\langle \rangle_L$ and $\langle \rangle_R$, on these bimodules are given by multiplication within $L^1(\overline{G})$ by
\begin{equation*}
 \langle f\,,\,g\rangle_L = f*g^*\,,\qquad \qquad \langle f\,,\,g\rangle_R = f^**g\,.
\end{equation*}
A $^*$-representation $\pi$ of $\hpalma(G, \Gamma)$ is said to be $\langle \rangle_R$-\emph{positive} if
\begin{equation}
\label{positivity of rep palma}
 \pi(\langle f\,,\,f \rangle_R) \geq 0\,, \qquad\text{for all}\;\;f \in C_c(\overline{G})p\,.
\end{equation}
Similarly, a $^*$-representation $\pi$ of $L^1(G, \Gamma)$ is said to be $\langle \rangle_R$-\emph{positive} when condition (\ref{positivity of rep palma}) holds for all $f \in L^1(\overline{G})p$.

In \cite[Corollary 5.19]{schl palma} Kaliszewski, Landstad and Quigg proved that, for a reduced pair $(G, \Gamma)$, there exists a category equivalence between unitary representations of $G$ generated by the $\Gamma$-fixed vectors and the $\langle \rangle_R$-positive representations of $\hpalma(G, \Gamma)$. This is in fact true for non-reduced Hecke pairs $(G, \Gamma)$ as well, as follows from the following observation:

\begin{proposition}
 Let $(G, \Gamma)$ be a Hecke pair and $(G_r, \Gamma_r)$ its reduction. There exists a  category equivalence between unitary representations of $G$ generated by the $\Gamma$-fixed vectors and unitary representations of $G_r$ generated by the $\Gamma_r$-fixed vectors.

 The correspondence is as follows: a representation $\pi:G_r \to U(\mathscr{H})$ is mapped to the representation $\pi\circ q$, where $q: G \to G_r$ is the quotient map. Its inverse map takes a representation $\rho:G \to U(\mathscr{H})$ to the representation $\widetilde{\rho}$ of $G_r$ on the same Hilbert space, given by $\widetilde{\rho}([g]) := \rho(g)$.
\end{proposition}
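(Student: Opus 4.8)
The plan is to verify directly that the two stated assignments are well-defined functors and are mutually inverse, with the main work being to check that the correspondence is compatible with the "generated by fixed vectors" condition. First I would recall that $R^\Gamma = \bigcap_{g\in G} g\Gamma g^{-1}$ is the largest normal subgroup of $G$ contained in $\Gamma$, and $q\colon G \to G_r = G/R^\Gamma$ is the quotient map, with $\Gamma_r = \Gamma/R^\Gamma$. The key elementary fact I would isolate at the outset is that $q^{-1}(\Gamma_r) = \Gamma$, so that for any representation $\pi$ of $G_r$, a vector is fixed by $\Gamma_r$ under $\pi$ precisely when it is fixed by $\Gamma$ under $\pi \circ q$; in symbols, $\mathscr{H}^{\Gamma_r}_{\pi} = \mathscr{H}^{\Gamma}_{\pi\circ q}$.

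Next I would treat the forward map $\pi \mapsto \pi\circ q$. Since $q$ is a surjective group homomorphism, $\pi\circ q$ is a genuine unitary representation of $G$, and because $q$ is surjective we have $(\pi\circ q)(G) = \pi(G_r)$, so $\overline{(\pi\circ q)(G)\,\mathscr{H}^{\Gamma}} = \overline{\pi(G_r)\,\mathscr{H}^{\Gamma_r}}$ using the fixed-vector identity above. Hence $\pi\circ q$ is generated by its $\Gamma$-fixed vectors if and only if $\pi$ is generated by its $\Gamma_r$-fixed vectors, so the map lands in the correct class. For the backward map $\rho \mapsto \widetilde{\rho}$, the one point that needs care is that $\widetilde{\rho}([g]) := \rho(g)$ is well-defined: if $[g] = [h]$ then $g^{-1}h \in R^\Gamma$, and I must show $\rho(g^{-1}h) = \mathrm{id}$. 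This is exactly the step where the hypothesis that $\rho$ is generated by its $\Gamma$-fixed vectors enters — $R^\Gamma$ acts trivially on each $\Gamma$-fixed vector (since $R^\Gamma \subseteq \Gamma$), hence trivially on $\pi(G)\mathscr{H}^\Gamma$ (since $R^\Gamma$ is normal, so $\rho(r)\rho(g)\xi = \rho(g)\rho(g^{-1}rg)\xi = \rho(g)\xi$ for $\xi \in \mathscr{H}^\Gamma$), and therefore trivially on the dense subspace $\overline{\rho(G)\mathscr{H}^\Gamma} = \mathscr{H}$. This normality-plus-density argument is the crux of the proof.

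Once well-definedness of $\widetilde{\rho}$ is secured, the remaining verifications are routine. One checks that $\widetilde{\rho}$ is a unitary representation of $G_r$ (the homomorphism property follows immediately from that of $\rho$ together with $\widetilde{\rho}([g][h]) = \widetilde{\rho}([gh]) = \rho(gh) = \rho(g)\rho(h)$), and that it is generated by its $\Gamma_r$-fixed vectors by running the fixed-vector identity backwards. The two maps are mutually inverse by construction: $\widetilde{\rho}\circ q = \rho$ since $\widetilde{\rho}(q(g)) = \widetilde{\rho}([g]) = \rho(g)$, and conversely applying the tilde construction to $\pi\circ q$ recovers $\pi$ because $(\widetilde{\pi\circ q})([g]) = (\pi\circ q)(g) = \pi([g])$. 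Finally, both assignments act as the identity on Hilbert spaces and clearly intertwine intertwiners, so they are functorial and give the claimed category equivalence.

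I expect the main obstacle to be the well-definedness of $\widetilde{\rho}$, precisely because it is the only place where the "generated by fixed vectors" hypothesis is genuinely used: without it, $R^\Gamma$ need not act trivially and the construction fails. Everything else is formal bookkeeping once that point is established.
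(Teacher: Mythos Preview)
Your proposal is correct and follows essentially the same approach as the paper: both identify the well-definedness of $\widetilde{\rho}$ as the crux, and both establish it via the normality argument $\rho(r)\rho(s)\xi = \rho(s)\rho(s^{-1}rs)\xi = \rho(s)\xi$ for $r \in R^\Gamma$ and $\xi \in \mathscr{H}^\Gamma$, followed by density. Your write-up is in fact slightly more detailed than the paper's (you spell out the fixed-vector identity $\mathscr{H}^{\Gamma_r}_\pi = \mathscr{H}^{\Gamma}_{\pi\circ q}$ via $q^{-1}(\Gamma_r)=\Gamma$ and verify the mutual-inverse relations explicitly), but the logical structure is identical.
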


\begin{proof}
First we observe that the assignment $\pi \mapsto \pi \circ q$ does indeed produce a unitary representation of $G$ generated by the $\Gamma$-fixed vectors. This is obvious since the spaces of fixed vectors $\mathscr{H}^{\Gamma_r}$ and $\mathscr{H}^{\Gamma}$ are the same.

Secondly, for the inverse assignment, we need to check that $\widetilde{\rho}$ is well-defined, which amounts to show that $\rho(g) = \rho(gh)$ for any $g \in G$ and $h \in R^{\Gamma}$. For any $s \in G$ and $ \xi \in \mathscr{H}^{\Gamma}$ we have
\begin{eqnarray*}
 \rho(gh) \rho(s) \xi & = & \rho(g) \rho(s) \rho(s^{-1}h s) \xi\\
& = & \rho(g) \rho(s) \xi\,,
\end{eqnarray*}
because $s^{-1} h s \in R^{\Gamma} \subseteq \Gamma$. Hence, $\rho(gh) = \rho(g)$ on the space $\overline{\pi(G)\mathscr{H}^{\Gamma}}$. Since $\rho$ is assumed to be generated by the $\Gamma$-fixed vectors, it follows that $\rho(gh) = \rho(g)$.

It is also easy to see that $\widetilde{\rho}$ is generated by the $\Gamma_r$-fixed vectors and it is clear from the definitions that these assignments are inverse of one another.

This correspondence does not change the Hilbert spaces of the representations, so that the intertwiners of representations are preserved in a canonical way. It can then be easily seen that this defines a category equivalence. \qed
\end{proof}

In the light of Kaliszewski, Landstad and Quigg's result, for a Hecke pair $(G, \Gamma)$ for which all $^*$-representations of $\hpalma(G, \Gamma)$ are $\langle \rangle_R$-positive, there exists a category equivalence between unitary representations of $G$ generated by the $\Gamma$-fixed vectors and nondegenerate $^*$-representations of $\hpalma(G, \Gamma)$. In other words, Hall's equivalence holds when all $^*$-representations of $\hpalma(G, \Gamma)$ are $\langle \rangle_R$-positive. Furthermore, Kaliszewski, Landstad and Quigg proved also the following relation between $\langle \rangle_R$-positivity and $C^*$-completions of Hecke algebras:

\begin{theorem}[\cite{schl palma} Corollary 5.11]
\label{Kal, Land, Quigg thm on Halls equivalence and C completions palma}
 Let $(G, \Gamma)$ be a Hecke pair.
\begin{enumerate}
 \item Every  $^*$-representation of $\hpalma(G, \Gamma)$ is $\langle \rangle_R$-positive if and only if  $C^*(G, \Gamma)$ exists and $C^*(G, \Gamma) \cong C^*(L^1(G, \Gamma)) \cong pC^*(\overline{G})p$.
\item Every $^*$-representation of $L^1(G, \Gamma)$ is $\langle \rangle_R$-positive if and only if $C^*(L^1(G, \Gamma)) \cong pC^*(\overline{G})p$.
\end{enumerate}
\end{theorem}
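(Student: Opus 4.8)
The plan is to transport the whole statement to the Schlichting completion, where by the isomorphisms (\ref{several isomorophisms algebraic hecke algebra palma}) it becomes a statement about the corners $\hpalma(G,\Gamma)\cong pC_c(\overline{G})p$ and $L^1(G,\Gamma)\cong pL^1(\overline{G})p$ sitting inside the group algebra of the totally disconnected locally compact group $\overline{G}$. The engine of the argument is the imprimitivity bimodule $L^1(\overline{G})p$ displayed before the theorem: as a $\big(L^1(\overline{G})pL^1(\overline{G})\,,\,pL^1(\overline{G})p\big)$-imprimitivity bimodule it implements a Rieffel-induction correspondence between $^*$-representations of the corner $pL^1(\overline{G})p$ and $^*$-representations of the closed ideal $L^1(\overline{G})pL^1(\overline{G})$, and the latter, being an ideal of $L^1(\overline{G})$, correspond via an approximate-identity/nondegeneracy argument to representations of $L^1(\overline{G})$ itself, that is, to unitary representations of $\overline{G}$.

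First I would isolate the exact role of $\langle\rangle_R$-positivity. Inducing a $^*$-representation $\pi$ of $pL^1(\overline{G})p$ along the bimodule means equipping the algebraic tensor product $L^1(\overline{G})p\odot\mathscr{H}$ with the sesquilinear form determined by $\langle f\otimes\xi, g\otimes\eta\rangle=\langle\pi(\langle f,g\rangle_R)\xi,\eta\rangle$, and the positivity of this form on the diagonal is precisely the requirement $\pi(\langle f,f\rangle_R)=\pi(f^**f)\ge 0$ for all $f\in L^1(\overline{G})p$, i.e. exactly $\langle\rangle_R$-positivity. The key lemma I would prove is therefore that a $^*$-representation of the corner is $\langle\rangle_R$-positive if and only if it is the compression (by $\sigma(p)$) of a genuine representation $\sigma$ of $L^1(\overline{G})$: one direction is immediate, since $\pi=\sigma|_{pL^1(\overline{G})p}$ forces $\pi(f^**f)=\sigma(f)^*\sigma(f)\ge 0$; the other is the induction just described, followed by a verification that inducing up to $L^1(\overline{G})$ and compressing back are mutually inverse on nondegenerate representations. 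This lemma is where I expect the main obstacle to lie, because here one works with Banach $^*$-algebras rather than $C^*$-algebras, so positivity is not automatic and must be imposed by hand, and one must check that the induced form, once positive semidefinite, descends to an honest, bounded, nondegenerate $^*$-representation.

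With this correspondence in hand, part (2) follows by comparing universal seminorms. By construction $pC^*(\overline{G})p$ is the completion of the corner arising from all representations of $L^1(\overline{G})$, hence exactly from the $\langle\rangle_R$-positive representations of $pL^1(\overline{G})p$, whereas $C^*(L^1(G,\Gamma))$ is the enveloping $C^*$-algebra built from all $^*$-representations. The canonical surjection $C^*(L^1(G,\Gamma))\to pC^*(\overline{G})p$ is thus an isomorphism precisely when these two families of representations produce the same seminorm: if every $^*$-representation is $\langle\rangle_R$-positive the families coincide and the map is injective, while conversely any representation factoring through $pC^*(\overline{G})p$ sends each $f^**f$ to a positive operator, so a non-$\langle\rangle_R$-positive representation cannot factor through $pC^*(\overline{G})p$ and forces the surjection to have nontrivial kernel.

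Finally, part (1) is the same comparison carried out one level down, on $\hpalma(G,\Gamma)\cong pC_c(\overline{G})p$, with the extra subtlety that the very existence of $C^*(G,\Gamma)$ is at stake. Here I would observe that $\langle\rangle_R$-positive representations of the Hecke algebra are automatically bounded by the $C^*(\overline{G})$-norm, being compressions of representations of $C^*(\overline{G})$; hence if every $^*$-representation is $\langle\rangle_R$-positive, then $\sup_\pi\|\pi(f)\|<\infty$ for every $f$, which both guarantees that the enveloping $C^*$-algebra $C^*(G,\Gamma)$ exists and collapses the chain $C^*(G,\Gamma)\cong C^*(L^1(G,\Gamma))\cong pC^*(\overline{G})p$. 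Conversely, if this chain of isomorphisms holds, then every $^*$-representation of $\hpalma(G,\Gamma)$ factors through $pC^*(\overline{G})p$ and is therefore $\langle\rangle_R$-positive. Combined with the category equivalence of \cite[Corollary 5.19]{schl palma}, this is precisely what ties the $C^*$-completion statements back to Hall's equivalence.
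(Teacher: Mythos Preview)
This theorem is not proved in the paper at all: it is stated as a quotation of \cite[Corollary 5.11]{schl palma}, with no accompanying argument, and is used later as a black box (for instance in the proof of Theorem~\ref{C completions for the quasi symmetric grp alg palma}). So there is no ``paper's own proof'' to compare against.

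That said, your sketch is a faithful outline of how the Kaliszewski--Landstad--Quigg result is actually obtained in \cite{schl palma}: one shows that $\langle\rangle_R$-positive representations of the corner are exactly the compressions of representations of $L^1(\overline{G})$ (equivalently, of $C^*(\overline{G})$) via Rieffel induction along the bimodule $L^1(\overline{G})p$, and then the comparison of enveloping $C^*$-norms is immediate. Your identification of the delicate point---that positivity of the induced sesquilinear form is not automatic at the Banach-$^*$-algebra level and must be imposed as $\langle\rangle_R$-positivity---is exactly right. One small caution: in part~(1) the converse direction requires a bit more care than you indicate. Knowing that $C^*(G,\Gamma)$ exists and equals $pC^*(\overline{G})p$ tells you that every \emph{bounded} $^*$-representation of $\hpalma(G,\Gamma)$ factors through $pC^*(\overline{G})p$; to conclude that \emph{every} $^*$-representation is $\langle\rangle_R$-positive you are implicitly using that any $^*$-representation of $\hpalma(G,\Gamma)$ on a Hilbert space is automatically bounded once $C^*(G,\Gamma)$ exists, which is indeed true (and is the content of the existence of the enveloping $C^*$-algebra) but worth making explicit.
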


\subsection{Groups of subexponential growth}

 Let $G$ be a locally compact group with a Haar measure $\mu$. For a compact neighbourhood $V$ of $e$, the limit superior
\begin{align}
\label{growth rate limsup palma}
 \limsup_{n \to \infty} \mu(V^n)^{\frac{1}{n}}
\end{align}
will be called the \emph{growth rate} of $V$. Since $0 < \mu(V) \leq \mu(V^n)$ for all $n \in \mathbb{N}$ it is clear that the growth rate of $V$ is always greater or equal to one.

\begin{definition}
 A locally compact group $G$ is said to be of \emph{subexponential growth} if $\limsup_{n \to \infty} \mu(V^n)^{\frac{1}{n}} = 1$ for all compact neighbourhoods $V$ of $e$. Otherwise it is said to be of \emph{exponential growth}.
\end{definition}

The class of groups with subexponential growth is closed under taking closed subgroups \cite[Th\' eor\` eme I.2]{guiv palma} and quotients \cite[Th\' eor\` eme I.3]{guiv palma}. We observe that even though in \cite{guiv palma} the author is only working with compactly generated groups, the proofs of these results are general and hold for any locally compact group.

 It is known that if $G$ has subexponential growth as a discrete group, then it has subexponential growth with respect to any other locally compact topology \cite[Theorem 3.1]{hul palma}. The following is a slight generalization of this result, and the proof is done along similar lines:

\begin{proposition}
\label{subexp growth passes from a dense subgroup palma}
 Let $H$ be a dense subgroup of a locally compact group $\overline{H}$. If $H$ has subexponential growth as a discrete group, then $\overline{H}$ has subexponential growth in its locally compact topology.
\end{proposition}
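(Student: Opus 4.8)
The plan is to reduce the estimation of the Haar-measure growth $\mu(V^n)$ in $\overline{H}$ to a purely combinatorial counting problem inside the discrete group $H$, using density of $H$ to approximate the relevant relatively compact sets by finite subsets of $H$. Throughout I fix a compact neighbourhood $V$ of $e$ in $\overline{H}$; since $V\subseteq V\cup V^{-1}\cup\{e\}$ and the growth rate is monotone under inclusion of neighbourhoods, I may assume $V$ is symmetric with $e\in V$.

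First I would fix a symmetric open relatively compact neighbourhood $U$ of $e$ and, by continuity of multiplication at $e$, a symmetric open neighbourhood $U_1$ with $U_1^2\subseteq U$. The basic tool is a density-approximation step: if $K\subseteq\overline{H}$ is relatively compact, cover the compact set $\overline{K}$ by finitely many translates $g_jU_1$; density of $H$ gives $h_j\in H\cap g_jU_1$, whence $g_j\in h_jU_1$ and $g_jU_1\subseteq h_jU_1^2\subseteq h_jU$, so that $K\subseteq FU$ with $F=\{h_j\}\subseteq H$ finite. Applying this to the relatively compact sets $V$, $UF$, and $U^2$ produces finite symmetric subsets $F,F',F_0\subseteq H$ containing $e$ with $V\subseteq FU$, $UF\subseteq F'U$, and $U^2\subseteq F_0U$.

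Set $S:=F\cup F'\cup F_0\cup\{e\}$, a finite symmetric subset of $H$ containing $e$, i.e. a compact neighbourhood of $e$ for the discrete topology. The heart of the argument is the containment $V^n\subseteq S^{2n}U$, proved by induction: if $V^n\subseteq T_nU$ with $T_n\subseteq H$ finite, then $V^{n+1}\subseteq T_nUFU=T_n(UF)U\subseteq T_nF'U^2\subseteq T_nF'F_0U$, so $T_{n+1}:=T_nF'F_0$ works, and since $T_1=F$ one gets $T_n\subseteq S\cdot(S^2)^{\,n-1}\subseteq S^{2n}$. The one genuinely delicate point is precisely this inductive step: the copies of $U$ created at each stage become interleaved with elements of $H$, and a naive attempt to commute them past group elements would replace $U$ by conjugates and destroy all uniformity. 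This is exactly what the two absorption relations $UF\subseteq F'U$ and $U^2\subseteq F_0U$ circumvent, since they allow the $U$'s to be pushed to the right while enlarging the finite set only by the \emph{fixed} sets $F'$ and $F_0$.

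Finally, $V^n$ is covered by the $|S^{2n}|$ translates $\{hU:h\in S^{2n}\}$, so $\mu(V^n)\le |S^{2n}|\,\mu(\overline{U})$ and therefore $\mu(V^n)^{1/n}\le\big(|S^{2n}|^{1/(2n)}\big)^{2}\,\mu(\overline{U})^{1/n}$. As $n\to\infty$ the factor $\mu(\overline{U})^{1/n}$ tends to $1$, while recalling that Haar measure on the discrete group $H$ is counting measure, the subexponential growth hypothesis applied to the finite neighbourhood $S$ gives $\limsup_m|S^m|^{1/m}=1$, hence $\limsup_n|S^{2n}|^{1/(2n)}\le 1$. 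Since the growth rate of $V$ is always at least $1$, this yields $\limsup_n\mu(V^n)^{1/n}=1$ for every compact neighbourhood $V$, which is exactly the assertion that $\overline{H}$ has subexponential growth.
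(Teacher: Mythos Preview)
Your argument is correct and rests on the same idea as the paper's: use density of $H$ to cover products of the compact neighbourhood by finitely many $H$-translates of a fixed compact set, then invoke subexponential growth of the finite generating set inside the discrete group $H$.

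The paper's execution is shorter, however. Instead of introducing the auxiliary neighbourhoods $U,U_1$ and the three absorptions $V\subseteq FU$, $UF\subseteq F'U$, $U^2\subseteq F_0U$, the paper uses the compact neighbourhood $A$ itself as the ``error set'': from $HA=\overline{H}$ one covers $AA$ by translates $hA$ with $h\in H$ and obtains a single relation $AA\subseteq FA$ with $F\subseteq H$ finite. Induction then gives $A^n\subseteq F^{n-1}A$ immediately, with no need to push copies of $U$ past elements of $H$. Your more elaborate bookkeeping with $F'$ and $F_0$ is precisely what is needed when the error set is chosen independently of $V$; taking it to be $V$ (or $A$) itself makes those steps collapse. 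A minor remark: the symmetry of $F,F',F_0$ that you assert does not come out of the covering step, though it is easily arranged and in fact is not needed---any finite subset of $H$ containing $e$ is already a compact neighbourhood for the discrete topology.
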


\begin{proof}
Let $A \subseteq \overline{H}$ be a compact neighbourhood of $e$. First we claim that $H A = \overline{H}$. Since $A$ is a neighbourhood of $\{e\}$, there is an open set $U \subseteq A$ such that $e \in U$. To show that $H A = \overline{H}$, let $g \in \overline{H}$. Since $H$ is dense in $\overline{H}$ and $g(U \cap U^{-1})$ is open, it follows that there exists $h \in H \cap g(U \cap U^{-1})$. Thus, there exists $s \in U \cap U^{-1}$ such that $h = g s$, or equivalently, $g = hs^{-1}$. Since $s^{-1} \in U \cap U^{-1}$ we then have $g \in hU$, and thus $g \in hA$. Hence $\overline{H} = H A$.

 From the previous observation it follows that $\{hA\}_{h \in H}$ is a covering of the compact set $AA$, and since $A$ has non-empty interior there must exist a finite set $F \subset H$ such that $AA \subseteq F A$. Hence, we have $A^n \subseteq F^{n-1}A$, for all $n \geq 2$. Without loss of generality we can assume that $F$ contains the identity element. Now using the fact that $H$ has subexponential growth we obtain
\begin{eqnarray*}
 \limsup_{n \to \infty} \mu\big(A^n \big)^{\frac{1}{n}} & \leq &  \limsup_{n \to \infty} \mu(F^{n-1}A)^{\frac{1}{n}} \;\; \leq \;\; \limsup_{n \to \infty} |F^{n-1}|^{\frac{1}{n}} \mu\big(A\big)^{\frac{1}{n}} \;\; = \;\; 1\,. \;\;\; \qed
\end{eqnarray*}
\end{proof}

\begin{corollary}
\label{subexp growth of G implies the same for schl comp palma}
 Let $(G, \Gamma)$ be a discrete Hecke pair. If $G$ (or $G_r$) has subexponential growth, then so does $\overline{G}$.
\end{corollary}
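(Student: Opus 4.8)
The plan is to reduce everything to Proposition~\ref{subexp growth passes from a dense subgroup palma} by exhibiting a dense subgroup of $\overline{G}$ that has subexponential growth as a discrete group. First I would dispense with the parenthetical alternative in the hypothesis: since the class of groups of subexponential growth is closed under taking quotients \cite{guiv palma}, if $G$ has subexponential growth as a discrete group then so does the quotient $G_r = G/R^\Gamma$. Thus the two hypotheses ``$G$ has subexponential growth'' and ``$G_r$ has subexponential growth'' collapse to the same case, and it suffices to prove the statement assuming that $G_r$ has subexponential growth as a discrete group.

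Next I would invoke the construction of the Schlichting completion. By definition, $\overline{G}$ is the completion of the reduced pair $(G_r, \Gamma_r)$, and by property~4 of that construction the canonical map $\theta: G_r \to \overline{G}$ is an injective group homomorphism whose image $\theta(G_r)$ is dense in $\overline{G}$. Since having subexponential growth \emph{as a discrete group} is a purely algebraic property, invariant under group isomorphism, the dense subgroup $\theta(G_r) \cong G_r$ of $\overline{G}$ inherits subexponential growth as a discrete group from $G_r$.

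Finally, I would apply Proposition~\ref{subexp growth passes from a dense subgroup palma} with $H = \theta(G_r)$ and $\overline{H} = \overline{G}$: the subgroup $\theta(G_r)$ is dense in the locally compact group $\overline{G}$ and has subexponential growth as a discrete group, so the proposition yields that $\overline{G}$ has subexponential growth in its locally compact topology, which is exactly the claim. There is no substantive obstacle in this argument; the only points requiring a moment's care are the transport of the discrete-growth hypothesis across the group isomorphism $\theta$ and the fact that, although $G$ itself need not embed in $\overline{G}$ when the pair is non-reduced, its reduction $G_r$ always does embed densely, which is precisely what makes the proposition applicable.
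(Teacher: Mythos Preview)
Your proposal is correct and follows essentially the same approach as the paper's own proof: reduce from $G$ to $G_r$ using closure of subexponential growth under quotients, then apply Proposition~\ref{subexp growth passes from a dense subgroup palma} to the dense image of $G_r$ in $\overline{G}$. The additional care you take in noting that $G$ itself need not embed in $\overline{G}$ (only $G_r$ does) and in transporting the growth hypothesis along the isomorphism $\theta$ is a welcome elaboration, but the underlying argument is identical.
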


\begin{proof}
 If $G$ has subexponential growth than so does any of its quotients, so in particular $G_r$ also has subexponential growth. If $G_r$ has subexponential growth then so does $\overline{G}$ by Proposition \ref{subexp growth passes from a dense subgroup palma}. \qed
 \end{proof}

Groups with subexponential growth are always unimodular \cite[Proposition 12.5.8]{palmer palma} and amenable  \cite[Section 12.6.18]{palmer palma}. 

The class of groups with subexponential growth includes all locally nilpotent groups and all $FC^-$-groups  \cite[Theorem 12.5.17]{palmer palma}. In particular, all abelian and all compact groups have subexponential growth.

\section{Quasi-symmetric group algebras}
\label{quasi symmetric group algebras section palma}

Given a $^*$-algebra $A$ and an element $a \in A$ we will use throughout this chapter the notations $\sigma_A(a)$ to denote the spectrum of $a$ relative to $A$, and $R_A(a)$ to denote the spectral radius of $a$ relative to $A$.

 Recall, for example from \cite{palmer palma}, that a $^*$-algebra $A$ is said to be:
\begin{itemize}
 \item \emph{Hermitian} if $\sigma_{A}(a) \subseteq \mathbb{R}$, for any self-adjoint element $a = a^*$ of $A$.
 \item \emph{symmetric} if $\sigma_{A}(a^*a) \subseteq \mathbb{R}^+_0$, for any $a \in A$.
\end{itemize}

It is an easy fact that symmetry implies Hermitianess. The two properties are equivalent for Banach $^*$-algebras, as asserted by the Shiralli-Ford theorem \cite{shirali ford palma}.

Recall also that a locally compact group $G$ is called \emph{Hermitian} if $L^1(G)$ is a Hermitian (equivalently, symmetric) Banach $^*$-algebra. The class of Hermitian groups satisfies some known closure properties, some of which we list below:

\begin{enumerate}
 \item The class of Hermitian groups is closed under taking open subgroups and quotients \cite[Theorem 12.5.18]{palmer palma}.
 \item Let $1 \to H \to G \to G/ H \to 1$ be an extension of locally compact groups. If $H$ is Hermitian and $G/ H$ is finite, then $G$ is Hermitian \cite[Theorem 12.5.18]{palmer palma}.
\end{enumerate}

The class of groups we are interested in this chapter arise by relaxing the condition of symmetry on the group algebra:

\begin{definition}
 Let $G$ be a locally compact group. We will say that the group algebra $L^1(G)$ is \emph{quasi-symmetric} if $\sigma_{L^1(G)}(f^**f) \subseteq \mathbb{R}^+_0$ for any compactly supported continuous function $f$.
\end{definition}

Clearly, Hermitian groups have a quasi-symmetric group algebra. Another important class of groups with this property is that of groups with subexponential growth, which comes as a consequence of the work of Hulanicki (for discrete groups this was established in \cite{hul2 palma}):

\begin{proposition}
\label{subexp growth implies quasi-symmetry palma}
 If $G$ is a locally compact group with subexponential growth, then $L^1(G)$ is quasi-symmetric.
\end{proposition}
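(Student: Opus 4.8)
The plan is to reduce the spectral statement to an equality of spectral radii involving the left regular representation, which is precisely where the subexponential growth hypothesis enters. Write $\lambda$ for the left regular representation of $G$ on $L^2(G)$, and recall that $\lambda$ extends to a contractive $*$-homomorphism $L^1(G) \to B(L^2(G))$. For any self-adjoint element $a \in C_c(G)$ the operator $\lambda(a)$ is bounded and self-adjoint, so its spectrum is real and $\|\lambda(a)\|$ is accessible through the continuous functional calculus. The heart of the argument is the claim that for every self-adjoint $a \in C_c(G)$ one has $R_{L^1(G)}(a) = \|\lambda(a)\|$; once this is established, the positivity of the spectrum of $f^**f$ will follow by a separation argument.

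First I would prove this spectral radius formula. The inequality $\|\lambda(a)\| \le R_{L^1(G)}(a)$ is immediate, since $\|\lambda(a)\|^n = \|\lambda(a^n)\| \le \|a^n\|_{L^1}$ using that $\lambda(a)$ is self-adjoint and that $\lambda$ is contractive. For the reverse inequality I would choose a compact symmetric neighbourhood $S$ of $e$ containing $\operatorname{supp}(a)$, so that $\operatorname{supp}(a^n) \subseteq S^n$. Cauchy--Schwarz then gives $\|a^n\|_{L^1} \le \mu(S^n)^{1/2}\|a^n\|_{L^2}$, while viewing convolution by $a^{n-1}$ as the operator $\lambda(a^{n-1}) = \lambda(a)^{n-1}$ on $L^2(G)$ gives $\|a^n\|_{L^2} = \|a^{n-1}*a\|_{L^2} \le \|\lambda(a)\|^{n-1}\|a\|_{L^2}$. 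Taking $n$-th roots and letting $n \to \infty$, the factor $\mu(S^n)^{1/(2n)}$ tends to $1$ exactly because $G$ has subexponential growth, yielding $R_{L^1(G)}(a) \le \|\lambda(a)\|$.

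Next I would fix $f \in C_c(G)$ and set $a = f^**f$, which is self-adjoint and satisfies $\lambda(a) = \lambda(f)^*\lambda(f) \ge 0$; hence $K := \sigma(\lambda(a))$ is contained in $[0,r]$ with $r = \|\lambda(a)\|$. For any real polynomial $p$ with $p(0) = 0$, the element $p(a)$ again lies in $C_c(G)$ and is self-adjoint, so applying the spectral radius formula to it, together with the polynomial spectral mapping theorem (in the unitization of $L^1(G)$) and the functional calculus for $\lambda(a)$, produces the equality $\sup_{z \in \sigma_{L^1(G)}(a)}|p(z)| = \sup_{t \in K}|p(t)|$. Since $K \subseteq [0,r]$, the right-hand side is at most $\sup_{t \in [0,r]}|p(t)|$.

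The final step, which I expect to be the main obstacle, is to deduce from this family of equalities that $\sigma_{L^1(G)}(a) \subseteq [0,r] \subseteq \mathbb{R}^+_0$. Suppose $z_0 \in \sigma_{L^1(G)}(a)$ with $z_0 \notin [0,r]$. Rescaling $[0,r]$ to $[-1,1]$ and using the Chebyshev polynomials $T_n$ --- whose values stay bounded by $1$ on $[-1,1]$ but grow without bound at any point off $[-1,1]$ --- one produces, after subtracting the constant $T_n(-1)$ to enforce vanishing at $0$, a real polynomial $p$ with $p(0) = 0$ and $|p(z_0)| > \sup_{t \in [0,r]}|p(t)|$, contradicting the equality of the previous step. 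The delicate points are to verify that this Chebyshev construction genuinely separates \emph{every} point outside $[0,r]$ --- including real points just to the left of $0$ and complex points lying near the interval --- from $[0,r]$, while respecting the constraint $p(0) = 0$ forced by the absence of a unit in $L^1(G)$; this is where the precise growth of $T_n$ outside its interval of boundedness is essential. This is exactly the method of Hulanicki \cite{hul2 palma}, \cite{hul palma}, and notably it uses neither amenability nor any identification of $C^*(\overline{G})$ with the reduced algebra.
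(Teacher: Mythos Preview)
Your proposal is correct and follows essentially the same route as the paper: establish Hulanicki's spectral radius identity $R_{L^1(G)}(a)=\|\lambda(a)\|$ for self-adjoint $a\in C_c(G)$ via the Cauchy--Schwarz/subexponential-growth estimate, and then upgrade this to the spectral inclusion $\sigma_{L^1(G)}(f^**f)\subseteq\mathbb{R}_0^+$ by a polynomial separation argument. The only difference is packaging: the paper quotes the spectral radius identity from \cite{hul palma} and the passage from spectral radii to spectra from Barnes \cite{barnes palma} (who credits \cite{hul3 palma}), whereas you sketch both arguments---including the Chebyshev step---by hand.
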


\begin{proof}
Let $\lambda:L^1(G) \to B(L^2(G))$ denote the left regular representation of $L^1(G)$. Hulanicki proved in \cite{hul palma} that if $G$ has subexponential growth then
\begin{equation}
\label{hulainickis spectral radius palma}
 R_{L^1(G)}(f) = \|\lambda(f)\|\,,
\end{equation}
for any self-adjoint $f= f^*$ continuous function of compact support. Moreover, Barnes shows in \cite{barnes palma} (a  result which he credits to Hulanicki \cite{hul3 palma}) that if $A$ is a Banach $^*$-algebra, $B \subseteq A$ a $^*$-subalgebra and if $\pi:A \to B(\mathscr{H})$ is a faithful $^*$-representation such that
\begin{equation*}
 R_{A}(b) = \|\pi(b)\|\,,
\end{equation*}
for all self-adjoint elements $b = b^*$ in $B$, then $\sigma_{A}(b) = \sigma_{B(\mathscr{H})}(\pi(b))$ for every $b \in B$.

Considering $A$ and $B$ to be $L^1(G)$ and $C_c(G)$ respectively, we see from (\ref{hulainickis spectral radius palma}) that by taking $\pi$ to be $\lambda$ we immediately get that $\sigma_{L^1(G)}(f^**f) = \sigma_{B(L^2(G))}(\lambda(f^* * f))$ for any $f \in C_c(G)$. Thus, $\sigma_{L^1(G)}(f^**f) \subseteq \mathbb{R}_0^+$ for $f \in C_c(G)$, i.e. $L^1(G)$ is quasi-symmetric. \qed
 \end{proof}

The following result is the main result in this section and explains the reason for considering quasi-symmetric group algebras in the context of $C^*$-completions of Hecke pairs.

\begin{theorem}
\label{C completions for the quasi symmetric grp alg palma}
 Let $(G, \Gamma)$ be a Hecke pair. If $\overline{G}$ has a quasi-symmetric group algebra, then
\begin{equation*}
 C^*(L^1(G, \Gamma)) \cong pC^*(\overline{G})p\,.
\end{equation*}
In particular, there is a category equivalence between $^*$-representations of $L^1(G, \Gamma)$ and unitary representations of $G$ generated by the $\Gamma$-fixed vectors.
\end{theorem}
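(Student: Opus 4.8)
The plan is to reduce the desired isomorphism to a positivity statement via Theorem~\ref{Kal, Land, Quigg thm on Halls equivalence and C completions palma}(2), which asserts that $C^*(L^1(G, \Gamma)) \cong pC^*(\overline{G})p$ holds exactly when every $^*$-representation of $L^1(G, \Gamma)$ is $\langle \rangle_R$-positive. Using the canonical identification $L^1(G, \Gamma) \cong pL^1(\overline{G})p$, it therefore suffices to show that for every $^*$-representation $\pi$ of $L^1(G, \Gamma)$ and every $f \in L^1(\overline{G})p$ one has $\pi(\langle f, f \rangle_R) = \pi(f^* * f) \geq 0$. Here $f^* * f = p(f^* * f)p$ is a self-adjoint element of $pL^1(\overline{G})p = L^1(G, \Gamma)$, so the statement makes sense.

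First I would reduce to continuous, compactly supported $f$. Since the involution on $L^1(\overline{G})$ is isometric and convolution is contractive, and since every $^*$-representation of $L^1(G, \Gamma)$ is norm-continuous, the set of $f$ for which $\pi(f^* * f) \geq 0$ is closed in $L^1(\overline{G})p$ (the cone of positive operators in $B(\mathscr{H})$ being closed). As $f = f * p$ for $f \in L^1(\overline{G})p$ and $C_c(\overline{G})$ is dense in $L^1(\overline{G})$, the subspace $C_c(\overline{G})p$ is dense in $L^1(\overline{G})p$, and it consists of genuinely continuous compactly supported functions ($p = \chi_{\overline{\Gamma}}$ being one). It thus suffices to treat $f \in C_c(\overline{G})p \subseteq C_c(\overline{G})$.

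For such $f$ I would invoke quasi-symmetry of $L^1(\overline{G})$ directly: since $f \in C_c(\overline{G})$, the hypothesis gives $\sigma_{L^1(\overline{G})}(f^* * f) \subseteq \mathbb{R}^+_0$. The crucial step, which I expect to be the main obstacle, is to transfer this spectral information from $L^1(\overline{G})$ to the corner $L^1(G, \Gamma) = pL^1(\overline{G})p$. I would establish the general corner-spectrum identity
\[
 \sigma_{pL^1(\overline{G})p}(a) \setminus \{0\} = \sigma_{L^1(\overline{G})}(a) \setminus \{0\}, \qquad a \in pL^1(\overline{G})p,
\]
working inside the unitization $L^1(\overline{G})^+$ and writing $1 = p + (1 - p)$: for $\lambda \neq 0$ the element $a - \lambda$ splits as $(a - \lambda p)$ in the corner $pL^1(\overline{G})^+p = pL^1(\overline{G})p$ plus the piece $-\lambda(1 - p)$ in the complementary corner, which is invertible there with inverse $-\lambda^{-1}(1-p)$; since the two corners are orthogonal, $a - \lambda$ is invertible in $L^1(\overline{G})^+$ precisely when $a - \lambda p$ is invertible in $pL^1(\overline{G})p$. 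The care needed is exactly that $L^1(\overline{G})$ is non-unital while the corner carries the unit $p$, which is why one passes to the unitization and why only the nonzero parts of the spectra agree.

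Applying this to $a = f^* * f$ yields $\sigma_{L^1(G, \Gamma)}(f^* * f) \subseteq \mathbb{R}^+_0$ (adjoining $0$ being harmless). Hence $f^* * f$ is self-adjoint with non-negative spectrum in $L^1(G, \Gamma)$, so for any $^*$-representation $\pi$ we have $\sigma_{B(\mathscr{H})}(\pi(f^* * f)) \subseteq \sigma_{L^1(G, \Gamma)}(f^* * f) \cup \{0\} \subseteq \mathbb{R}^+_0$; a self-adjoint operator with non-negative spectrum being positive, $\pi(f^* * f) \geq 0$. The density reduction then gives $\langle \rangle_R$-positivity of every $^*$-representation, and Theorem~\ref{Kal, Land, Quigg thm on Halls equivalence and C completions palma}(2) delivers $C^*(L^1(G, \Gamma)) \cong pC^*(\overline{G})p$. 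Finally, the ``in particular'' clause follows from the category equivalence recorded before Theorem~\ref{Kal, Land, Quigg thm on Halls equivalence and C completions palma}: once all $^*$-representations of $L^1(G, \Gamma)$ are $\langle \rangle_R$-positive, Kaliszewski, Landstad and Quigg's correspondence identifies them with the unitary representations of $G$ generated by the $\Gamma$-fixed vectors.
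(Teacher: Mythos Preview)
Your proposal is correct and follows essentially the same route as the paper: reduce via Theorem~\ref{Kal, Land, Quigg thm on Halls equivalence and C completions palma}(2) to $\langle\rangle_R$-positivity, approximate by $C_c(\overline{G})p$, use quasi-symmetry of $L^1(\overline{G})$, and pass the spectral information to the corner $pL^1(\overline{G})p$ before pushing it through~$\pi$. The only cosmetic difference is that the paper isolates the spectrum-transfer step as a separate lemma proving just the one-sided inclusion $\sigma_{pL^1(\overline{G})p}(a) \subseteq \sigma_{L^1(\overline{G})}(a)$ (by compressing an inverse of $a-\lambda\mathbf{1}$ with $p$), whereas you obtain the two-sided identity modulo~$\{0\}$ via the Peirce decomposition $1=p+(1-p)$; either suffices.
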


\begin{lemma}
\label{inclusion of spectrum palma}
 Let $(G, \Gamma)$ be a Hecke pair and $f \in pL^1(\overline{G})p$. We have that $\sigma_{pL^1(\overline{G})p}(f) \subseteq \sigma_{L^1(\overline{G})}(f)$.
\end{lemma}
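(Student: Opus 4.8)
The plan is to reduce the statement to a comparison of invertibility in two unital algebras. Write $A := L^1(\overline{G})$. Since $A$ is in general non-unital, the spectrum relative to $A$ is by definition computed in the unitization $A^+ = A \oplus \mathbb{C}1$, so that $\sigma_{L^1(\overline{G})}(f) = \sigma_{A^+}(f)$, while the corner $pL^1(\overline{G})p = pAp$ is unital with unit $p$. Because $p \in A$ we have $pA^+p = pAp$, so everything takes place inside $A^+$, and the goal becomes to compare invertibility of $\lambda 1 - f$ in $A^+$ with invertibility of $\lambda p - f$ in $pAp$.

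The key observation is that, since $f \in pAp$, we have $pf = fp = f$, and hence $p$ commutes with $\lambda 1 - f$ for every $\lambda \in \mathbb{C}$; moreover $\lambda p - f = p(\lambda 1 - f) = (\lambda 1 - f)p$. I would first dispose of $\lambda = 0$: as $f$ lies in the proper two-sided ideal $A$ of $A^+$, it can never be invertible in $A^+$, so $0 \in \sigma_{L^1(\overline{G})}(f)$ automatically, and this point requires no argument. It therefore suffices, arguing by contraposition, to show that any $\lambda \neq 0$ with $\lambda \notin \sigma_{L^1(\overline{G})}(f)$ also satisfies $\lambda \notin \sigma_{pAp}(f)$.

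For such a $\lambda$ the element $u := (\lambda 1 - f)^{-1}$ exists in $A^+$, and since $p$ commutes with $\lambda 1 - f$ it commutes with $u$ as well. Setting $v := pup \in pAp$ and using the pairwise commutativity of $p$, $u$ and $\lambda 1 - f$, one computes directly
\begin{equation*}
 v(\lambda p - f) = pu\, p(\lambda 1 - f) = pu(\lambda 1 - f) = p\,, \qquad (\lambda p - f)v = p(\lambda 1 - f)\,pu = p(\lambda 1 - f)u = p\,,
\end{equation*}
so that $v$ is a two-sided inverse of $\lambda p - f$ in $pAp$. Hence $\lambda \notin \sigma_{pAp}(f)$, which yields the desired inclusion $\sigma_{pL^1(\overline{G})p}(f) \subseteq \sigma_{L^1(\overline{G})}(f)$.

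The main point to watch — and the only genuine obstacle — is the mismatch of units: $pAp$ carries the unit $p$ whereas $A^+$ has the adjoined unit $1$, and $A$ itself need not be unital, so the naive ``smaller algebra has larger spectrum'' principle does not apply verbatim. What makes the compression $u \mapsto pup$ land back in the corner and actually invert $\lambda p - f$ is precisely the commutation relation $pf = fp = f$, which is what forces $\lambda 1 - f$ to be ``block-diagonal'' with respect to $p$. I expect the rest to be routine, with the handling of the exceptional point $\lambda = 0$ being the only place where non-unitality must be invoked explicitly.
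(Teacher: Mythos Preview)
Your proof is correct and follows essentially the same approach as the paper: both pass to the unitization, assume $\lambda$ lies in the resolvent there, and compress the inverse by $p$ to obtain an inverse of $\lambda p - f$ in the corner. The paper writes the inverse explicitly as $g + \beta\mathbf{1}$ and multiplies out, while you phrase the same computation via the commutation $pf = fp$; your separate treatment of $\lambda = 0$ and explicit verification of both one-sided inverses are harmless extra care.
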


\begin{proof}
Let us denote by $L^1(\overline{G})^{\dag}$ the minimal unitization of $L^1(\overline{G})$ and let ${\bf 1} \in L^1(\overline{G})^{\dag}$ be its unit. Let $\lambda \in \mathbb{C}$ and suppose that $f - \lambda {\bf 1}$ is invertible in $L^1(\overline{G})^{\dag}$. We want to prove that $f - \lambda p$ is invertible in $pL^1(\overline{G})p$. Invertibility of $f - \lambda {\bf 1}$ in $L^1(\overline{G})^{\dag}$ means that there exist $g \in L^1(\overline{G})$ and $\beta \in \mathbb{C}$ such that $ {\bf 1} = (f - \lambda {\bf 1})(g + \beta {\bf 1})$. Hence we have
\begin{eqnarray*}
 p & = & p(f - \lambda {\bf 1})(g + \beta {\bf 1})p \;\; = \;\; (pf - \lambda p)(gp +  \beta p)\\
& = & (fp - \lambda p)(gp +  \beta p) \;\; = \;\; (f - \lambda p)p(gp +  \beta p)\\
& = &  (f - \lambda p)(pgp +  \beta p)\,.
\end{eqnarray*}
Hence, $f - \lambda p$ is invertible in $pL^1(\overline{G})p$ and this finishes the proof. \qed
\end{proof}

\begin{proof}[Theorem \ref{C completions for the quasi symmetric grp alg palma}]
Due to the canonical isomorphism $L^1(G, \Gamma) \cong  pL^1(\overline{G}) p$, it is enough to prove that $C^*(pL^1(\overline{G}) p) \cong  pC^*(\overline{G}) p $. By \cite[Corollary 5.11]{schl palma} we only need to show that every representation of $pL^1(\overline{G})p$ is $\langle \rangle_R$-positive. Let $\pi:pL^1(\overline{G})p \to B(\mathscr{H})$ be a $^*$-representation and $f \in L^1(\overline{G})p$. Let $\{g_n\}_{n \in \mathbb{N}}$ be a sequence of functions in $C_c(\overline{G})p$ such that $g_n \to f$ in $L^1(\overline{G})$. Then, we also have $g_n^**g_n \to f^**f$ in $L^1(\overline{G})$. It is a standard fact that
\begin{equation*}
 \sigma_{B(\mathscr{H})}(\pi(g_n^**g_n)) \subseteq \sigma_{pL^1(\overline{G})p}(g^*_n*g_n)\,,
\end{equation*}
  and  by Lemma  \ref{inclusion of spectrum palma} we have $\sigma_{pL^1(\overline{G})p}(g^*_n* g_n) \subseteq \sigma_{L^1(\overline{G})}(g^*_n* g_n)$. Moreover, since $L^1(\overline{G})$ is quasi-symmetric we have that $\sigma_{L^1(\overline{G})}(g^*_n* g_n) \subseteq \mathbb{R}^+_0$. All these inclusions combined give
\begin{equation*}
  \sigma_{B(\mathscr{H})}(\pi(g_n^**g_n)) \subseteq \sigma_{pL^1(\overline{G})p}(g^*_n*g_n) \subseteq \sigma_{L^1(\overline{G})}(g^*_n* g_n) \subseteq \mathbb{R}^+_0\,,
\end{equation*}
and therefore $\pi(g^*_n* g_n)$ is a positive operator for every $n \in \mathbb{N}$. Thus, the limit $\pi(f^**f) = \lim \pi(g^*_n* g_n)$ is also a positive operator. In other words, $\pi(\langle f, f \rangle_R) \geq 0$. \qed
\end{proof}

As a consequence we immediately recover Kaliszewski, Landstad and Quigg's original result and also that $C^*(L^1(G, \Gamma)) \cong pC^*(\overline{G})p \cong C^*_r(G, \Gamma)$ for Hecke pairs arising from groups of subexponential growth.

\begin{corollary}[\cite{schl palma}, Theorem 5.14]
  Let $(G, \Gamma)$ be a Hecke pair. If $\overline{G}$ is Hermitian, then $C^*(L^1(G, \Gamma)) \cong pC^*(\overline{G})p$.
\end{corollary}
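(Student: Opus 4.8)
The plan is to deduce this corollary directly from Theorem~\ref{C completions for the quasi symmetric grp alg palma} by verifying that the Hermitian hypothesis is merely a special case of the quasi-symmetric hypothesis. In other words, I would reduce the claim to showing that every Hermitian group has a quasi-symmetric group algebra, after which the isomorphism $C^*(L^1(G, \Gamma)) \cong pC^*(\overline{G})p$ follows at once.

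First I would recall that, by definition, $\overline{G}$ being Hermitian means that $L^1(\overline{G})$ is a Hermitian Banach $^*$-algebra, i.e. $\sigma_{L^1(\overline{G})}(a) \subseteq \mathbb{R}$ for every self-adjoint $a = a^*$. Since $L^1(\overline{G})$ is a Banach $^*$-algebra, the Shiralli-Ford theorem \cite{shirali ford palma} applies and gives that Hermitianness is equivalent to symmetry; hence $\sigma_{L^1(\overline{G})}(a^*a) \subseteq \mathbb{R}^+_0$ for \emph{every} element $a \in L^1(\overline{G})$, with no restriction on the support.

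Next I would specialize this to the elements relevant for quasi-symmetry. Taking $a = f$ for an arbitrary $f \in C_c(\overline{G})$, the inclusion $\sigma_{L^1(\overline{G})}(f^**f) \subseteq \mathbb{R}^+_0$ holds. This is precisely the defining condition for $L^1(\overline{G})$ to be quasi-symmetric. With quasi-symmetry of $\overline{G}$ thus established, Theorem~\ref{C completions for the quasi symmetric grp alg palma} yields the desired isomorphism.

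Since quasi-symmetry only constrains the spectra of $f^**f$ for compactly supported continuous $f$, whereas symmetry constrains the spectra of $a^*a$ for all $a \in L^1(\overline{G})$, there is genuinely no obstacle here: the implication runs in the easy direction, the only point being that the class of elements appearing in the definition of quasi-symmetry is contained in all of $L^1(\overline{G})$. The entire content of the corollary resides in the already-proved Theorem~\ref{C completions for the quasi symmetric grp alg palma}; the present statement merely records that the Hermitian case, which was Kaliszewski, Landstad and Quigg's original result, is subsumed by it.
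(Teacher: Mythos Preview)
Your proposal is correct and matches the paper's approach exactly: the paper states this corollary without proof, having already observed (immediately after the definition of quasi-symmetry) that Hermitian groups clearly have quasi-symmetric group algebras, so the result is an immediate application of Theorem~\ref{C completions for the quasi symmetric grp alg palma}. Your explicit invocation of Shirali--Ford to pass from Hermitian to symmetric is precisely the justification the paper alludes to earlier in Section~\ref{quasi symmetric group algebras section palma}.
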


\begin{corollary}
\label{subexponential growth imply completions of Hecke algebra cor palma}
 Let $(G, \Gamma)$ be a Hecke pair. If one of the groups $G$, $G_r$ or $\overline{G}$ has subexponential growth, then $C^*(L^1(G, \Gamma)) \cong pC^*(\overline{G})p \cong C^*_r(G,\Gamma)$.
\end{corollary}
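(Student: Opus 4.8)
The plan is to reduce each of the three hypotheses to a single statement about the Schlichting completion $\overline{G}$ and then to chain together the results already established in the preceding sections. First I would observe that it suffices to treat the case in which $\overline{G}$ itself has subexponential growth: by Corollary \ref{subexp growth of G implies the same for schl comp palma}, subexponential growth of $G$ or of $G_r$ passes to $\overline{G}$, so under any of the three hypotheses we may assume that $\overline{G}$ has subexponential growth.

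Granting this, the first isomorphism is immediate. By Proposition \ref{subexp growth implies quasi-symmetry palma}, subexponential growth of $\overline{G}$ implies that $L^1(\overline{G})$ is quasi-symmetric, and Theorem \ref{C completions for the quasi symmetric grp alg palma} then delivers
\begin{equation*}
 C^*(L^1(G, \Gamma)) \cong pC^*(\overline{G})p\,.
\end{equation*}

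For the second isomorphism I would invoke Tzanev's theorem (Theorem \ref{Tzanevs theorem palma}), which gives $pC^*(\overline{G})p \cong C^*_r(G, \Gamma)$ precisely when $\overline{G}$ is amenable. Since groups of subexponential growth are amenable, as recalled in the preliminaries, $\overline{G}$ is amenable and hence this isomorphism holds as well. Concatenating the two isomorphisms yields the desired chain $C^*(L^1(G, \Gamma)) \cong pC^*(\overline{G})p \cong C^*_r(G, \Gamma)$.

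As for the main obstacle: this corollary is essentially a bookkeeping exercise that assembles the genuine work carried out elsewhere — Hulanicki's spectral radius formula underlying Proposition \ref{subexp growth implies quasi-symmetry palma}, and the covering argument of Proposition \ref{subexp growth passes from a dense subgroup palma} underlying Corollary \ref{subexp growth of G implies the same for schl comp palma} — so I do not expect any real difficulty. The only point worth double-checking is that the closure properties of subexponential growth used in the reduction step (stability under the quotient $G \to G_r$ and under passage from the dense discrete subgroup $G_r$ to $\overline{G}$) are valid for abstract discrete $G$, which is exactly what Corollary \ref{subexp growth of G implies the same for schl comp palma} certifies.
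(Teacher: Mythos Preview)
Your proposal is correct and follows essentially the same approach as the paper's proof: reduce to the case that $\overline{G}$ has subexponential growth via Corollary~\ref{subexp growth of G implies the same for schl comp palma}, deduce quasi-symmetry of $L^1(\overline{G})$ via Proposition~\ref{subexp growth implies quasi-symmetry palma} to obtain the first isomorphism from Theorem~\ref{C completions for the quasi symmetric grp alg palma}, and then invoke Tzanev's theorem together with amenability to obtain the second isomorphism.
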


\begin{proof}
 By Corollary \ref{subexp growth of G implies the same for schl comp palma}, if $G$ or $G_r$ has subexponential growth, then so does $\overline{G}$ in its totally disconnected locally compact topology. Since $\overline{G}$ has subexponential growth, we have that $L^1(\overline{G})$ is quasi-symmetric and therefore $C^*(L^1(G, \Gamma)) \cong pC^*(\overline{G})p$ by Theorem \ref{C completions for the quasi symmetric grp alg palma}. The isomorphism $pC^*(\overline{G})p \cong C^*_r(G, \Gamma)$ follows from Tzanev's theorem (Theorem \ref{Tzanevs theorem palma} in the present work), due to the fact that subexponential growth implies the amenability of the group $\overline{G}$. \qed
\end{proof}

\section{Remark on subexponential growth for Hecke pairs}

The hypothesis in Corollary \ref{subexponential growth imply completions of Hecke algebra cor palma} require that one of the groups $G$, $G_r$ or $\overline{G}$ has subexponential growth. A natural question to ask is if there is a reasonable definition of subexponential growth for a Hecke pair $(G, \Gamma)$. Such a definition should heuristically mean that the ``quotient'' $G / \Gamma $ has subexponential growth, and could in principle be taken as the hypothesis in Corollary \ref{subexponential growth imply completions of Hecke algebra cor palma} and render a more general result. We say more general because one should expect that subexponential growth of $G$ (or $G_r$ or $\overline{G}$) would imply subexponential growth of the pair $(G, \Gamma)$, since this property passes to quotients.

  As we shall see, it is possible to give such a definition, but this turns out to be equivalent to the Schlichting completion $\overline{G}$ having subexponential growth, as it is intuitively expected: since $\overline{\Gamma}$ is compact, subexponential growth of $\overline{G} / \overline{\Gamma}$ is equivalent to subexponential growth of $\overline{G}$.

Let $(G, \Gamma)$ be a Hecke pair. Given a finite subset $A \subseteq \Gamma \backslash G /\Gamma$ of double cosets, we will denote by $L(A):= \sum_{[g] \in A} L(g)$ the total number of left cosets inside $A$. Also, if $A, B \subseteq \Gamma \backslash G / \Gamma$ are finite subsets we will denote by $AB  \subseteq \Gamma \backslash G / \Gamma$ the set
\begin{equation*}
 AB := \{  [g] \in \Gamma \backslash G / \Gamma: \Gamma g \Gamma \subseteq \Gamma a \Gamma b\Gamma,\; \text{for some}\;\; [a] \in A, [b] \in B \}\,,
\end{equation*}
which is itself a finite set. Moreover, for $n \in \mathbb{N}$ we define $A^n$ inductively as $A^n := AA^{n-1}$, with $A^0 := A$.

\begin{definition}
\label{subexp growth for Hecke pairs palma}
 We will say that a Hecke pair $(G, \Gamma)$ has \emph{subexponential growth} if for every finite set $A \subseteq \Gamma \backslash G / \Gamma$ we have
\begin{equation*}
 \limsup_{n \to \infty} L(A^n)^{\frac{1}{n}} = 1\,.
\end{equation*}
\end{definition}

We note that when $\Gamma$ is a normal subgroup Definition \ref{subexp growth for Hecke pairs palma} means precisely that the quotient group $G / \Gamma$ has subexponential growth.

\begin{proposition}
 The following statements are equivalent:
\begin{enumerate}
 \item[(i)] $(G, \Gamma)$ has subexponential growth.
 \item[(ii)] $(G_r, \Gamma_r)$ has subexponential growth.
 \item[(iii)] $(\overline{G}, \overline{\Gamma})$ has subexponential growth.
 \item[(iv)] $\overline{G}$ has subexponential growth.
\end{enumerate}

\end{proposition}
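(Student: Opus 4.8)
The plan is to reduce everything to a single dictionary between finite subsets of double cosets and compact open subsets of $\overline{G}$, and then to observe that the two growth conditions literally compute the same quantities. The equivalences (i) $\Leftrightarrow$ (ii) $\Leftrightarrow$ (iii) should come for free from the structure theory recalled in Section \ref{preliminaries section palma}: the canonical bijections $\Gamma \backslash G / \Gamma \to \Gamma_r \backslash G_r / \Gamma_r \to \overline{\Gamma} \backslash \overline{G} / \overline{\Gamma}$ identify the three double coset spaces in a way that preserves the left-coset counting function $L$ and the subset product $AB$ (the latter being defined purely through the inclusions $\Gamma g \Gamma \subseteq \Gamma a \Gamma b \Gamma$, which are transported by the $^*$-isomorphisms of the Hecke algebras). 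Under these identifications the quantity $L(A^n)$ is the same for all three pairs, so Definition \ref{subexp growth for Hecke pairs palma} imposes identical conditions and (i) $\Leftrightarrow$ (ii) $\Leftrightarrow$ (iii) is immediate.

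The substance is (iii) $\Leftrightarrow$ (iv). First I would set up the dictionary: for a finite set $A \subseteq \overline{\Gamma} \backslash \overline{G} / \overline{\Gamma}$, put $\widetilde{A} := \bigcup_{[g] \in A} \overline{\Gamma} g \overline{\Gamma} \subseteq \overline{G}$, which is a compact open $\overline{\Gamma}$-biinvariant set, each double coset being a finite union of cosets of the compact open subgroup $\overline{\Gamma}$. Two facts drive the argument. Since the Haar measure is normalized so that $\mu(\overline{\Gamma}) = 1$ and each double coset $\overline{\Gamma} g \overline{\Gamma}$ is a disjoint union of $L(g)$ left cosets, each of measure $1$ by left invariance, we get $\mu(\widetilde{A}) = \sum_{[g] \in A} L(g) = L(A)$. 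Second, from $\overline{\Gamma} a \overline{\Gamma} \cdot \overline{\Gamma} b \overline{\Gamma} = \overline{\Gamma} a \overline{\Gamma} b \overline{\Gamma}$ one checks that $\widetilde{A} \cdot \widetilde{B} = \widetilde{AB}$, and hence inductively $\widetilde{A^n} = (\widetilde{A})^{n+1}$, the exponent shift coming from the convention $A^0 := A$. Combining the two facts yields the key identity $L(A^n) = \mu\big((\widetilde{A})^{n+1}\big)$.

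With this identity in hand both implications are routine. For (iv) $\Rightarrow$ (iii), given a finite $A$ I would enlarge it to $A' := A \cup \{[e]\}$, so that $\widetilde{A'} \supseteq \overline{\Gamma}$ is a compact neighbourhood of $e$; monotonicity of the subset product gives $L(A^n) \leq L((A')^n) = \mu\big((\widetilde{A'})^{n+1}\big)$, and subexponential growth of $\overline{G}$ forces the $\limsup$ of the $n$-th roots of the right-hand side to equal $1$. For (iii) $\Rightarrow$ (iv), given a compact neighbourhood $V$ of $e$ I would replace it by the $\overline{\Gamma}$-biinvariant compact open set $\overline{\Gamma} V \overline{\Gamma}$, which equals $\widetilde{A}$ for some finite $A$; then $V^n \subseteq (\widetilde{A})^n = \widetilde{A^{n-1}}$, so $\mu(V^n) \leq L(A^{n-1})$, and subexponential growth of the pair controls this. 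The main obstacle, such as it is, is purely bookkeeping: verifying $\widetilde{A}\widetilde{B} = \widetilde{AB}$ and $\mu(\widetilde{A}) = L(A)$ against the precise definitions, and checking that the off-by-one in the exponents and the passage to compact neighbourhoods of $e$ do not disturb the limit; none of these alter the value $1$, since $(n+1)/n \to 1$ and the relevant sequences have bounded $n$-th roots.
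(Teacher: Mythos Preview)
Your proposal is correct and follows essentially the same approach as the paper: the equivalences (i)--(iii) are dismissed via the canonical identifications of double coset spaces, and (iii)~$\Leftrightarrow$~(iv) is proved through the same dictionary $A \leftrightarrow \widetilde{A}$ together with the identities $\mu(\widetilde{A}) = L(A)$ and $\widetilde{A}\widetilde{B} = \widetilde{AB}$. If anything you are slightly more careful than the paper, which writes $\widetilde{A^n} = (\widetilde{A})^n$ (glossing over the off-by-one forced by the convention $A^0:=A$) and does not bother to ensure that $\widetilde{A}$ is a neighbourhood of $e$ before invoking the growth hypothesis on $\overline{G}$; your enlargement $A' = A \cup \{[e]\}$ handles this cleanly.
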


\begin{proof}
 It is clear that $(G, \Gamma)$, $(G_r, \Gamma_r)$ and $(\overline{G}, \overline{\Gamma})$ have exactly the same growth rate, since we can canonically identify the double coset spaces $\Gamma \backslash G / \Gamma$, $\Gamma_r \backslash G_r / \Gamma_r$ and $\overline{\Gamma} \backslash \overline{G} / \overline{\Gamma}$, and also the corresponding Hecke algebras $\hpalma(G, \Gamma)$, $\hpalma(G_r, \Gamma_r)$ and $\hpalma(\overline{G}, \overline{\Gamma})$. So it remains to see that $(iii) \Leftrightarrow (iv)$.

To see that $(iii) \Rightarrow (iv)$ let us consider a compact neighbourhood $A \subseteq \overline{G}$. Since the set $\overline{\Gamma} A \overline{\Gamma} \subseteq \overline{G}$ is both compact and open, it follows that $ B:= \overline{\Gamma} \backslash A / \overline{\Gamma}$ is a finite set of double cosets, and it is not difficult to see that $\overline{\Gamma} \backslash A^n / \overline{\Gamma} \subseteq B^n$.

With $\mu$ being the normalized Haar measure on $\overline{G}$ so that $\mu(\overline{\Gamma}) = 1$, we have that
\begin{eqnarray*}
 L(\overline{\Gamma} \backslash A^n / \overline{\Gamma}) & = & \sum_{ [g] \in \overline{\Gamma} \backslash A^n / \overline{\Gamma}} L(g) \;\; = \;\; \sum_{[g] \in \overline{\Gamma} \backslash A^n / \overline{\Gamma}} \mu(\overline{\Gamma} g \overline{\Gamma})\\
& = & \mu \big(\bigcup_{[g] \in \overline{\Gamma} \backslash A^n / \overline{\Gamma}} \overline{\Gamma} g \overline{\Gamma}\; \big) \;\; = \;\; \mu(\overline{\Gamma} A^n \overline{\Gamma})\,.
\end{eqnarray*}
Hence, from the fact that $A^n \subseteq \overline{\Gamma} A^n \overline{\Gamma}$ and the assumption that $(\overline{G}, \overline{\Gamma})$ has subexponential growth, it follows that
\begin{eqnarray*}
 \limsup_{n \to \infty} \mu(A^n)^{\frac{1}{n}} & \leq & \limsup_{n \to \infty} \mu(\overline{\Gamma}A^n \overline{\Gamma})^{\frac{1}{n}} \;\; = \;\; \limsup_{n \to \infty} L(\overline{\Gamma} \backslash A^n / \overline{\Gamma})^{\frac{1}{n}}\\
& \leq &  \limsup_{n \to \infty} L(B^n)^{\frac{1}{n}} \;\;=\;\; 1\,.
\end{eqnarray*}
Let us now prove the direction $(iv) \Rightarrow (iii)$. For any given set $A \subseteq \overline{\Gamma} \backslash \overline{G} / \overline{\Gamma}$ there is a correspondent set $\widetilde{A} \subseteq \overline{G}$, consisting of the union of all the double cosets in $A$, i.e. $\widetilde{A}:= \{g \in \overline{G}: [g] \in A \}$. It is not difficult to see that $\widetilde{AB} = \widetilde{A}\widetilde{B}$, for any $A, B \in \overline{\Gamma} \backslash \overline{G} / \overline{\Gamma}$, and therefore $\widetilde{A^n} = \big(\widetilde{A} \big)^n$.

 Let us take a finite set $A  \subseteq \overline{\Gamma} \backslash \overline{G} / \overline{\Gamma}$. We have
\begin{equation*}
 \limsup_{n \to \infty} L(A^n)^{\frac{1}{n}}  =   \limsup_{n \to \infty} \mu \big(\widetilde{A^n} \big)^{\frac{1}{n}}  =  \limsup_{n \to \infty} \mu \big(\big(\widetilde{A}\big)^n \big)^{\frac{1}{n}} = 1\,. \qquad \qed
\end{equation*}

\end{proof}

\section{Further remarks on groups with a quasi-symmetric group algebra}
\label{further remarks quasi symmetry section palma}

The classes of Hermitian groups and groups with subexponential growth are in general different. On one side, there are examples of Hermitian groups which do not have subexponential growth, such as the affine group of the real line $\mathrm{Aff}(\mathbb{R}):= \mathbb{R} \rtimes \mathbb{R}^*$, with its usual topology as a (connected) Lie group, as shown by Leptin \cite{leptin palma}. On the other side, there are examples of groups with subexponential growth which are not Hermitian, such as the Fountain-Ramsay-Williamson group \cite{fountain palma}, which is the discrete group with the presentation
\begin{equation*}
 \big\langle \{u_j\}_{j \in \mathbb{N}} \;|\; u_j^2 = e \;\;\text{and}\;\; u_iu_ju_ku_j = u_ju_ku_ju_i \;\;\forall i,j < k \in \mathbb{N} \big\rangle\,.
\end{equation*}
Fountain, Ramsay and Williamson showed that this group is not Hermitian despite being locally finite (thus, having subexponential growth). Another such example was given by Hulanicki in \cite{hul4 palma}.

Using these examples we can show that the class of groups with a quasi-symmetric group algebra is strictly larger than the union of the classes of Hermitian groups and groups with subexponential growth. In that regard we have the following result:

\begin{proposition}
\label{H times L example palma}
 Let $H$ be a Hermitian locally compact group with exponential growth and let $L$ be a discrete locally finite group which is not Hermitian. The locally compact group $G:= H \times L$ has a quasi-symmetric group algebra, but it is neither Hermitian nor has subexponential growth.

An example of such a group is given by taking $H := \mathrm{Aff}(\mathbb{R})$ and $L$ the  Fountain-Ramsay-Williamson group.
\end{proposition}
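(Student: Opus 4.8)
The plan is to verify the three assertions about $G := H \times L$ separately: that it fails to be Hermitian, fails to have subexponential growth, yet has a quasi-symmetric group algebra. The two negative statements are the quick ones. Since $H$ is a quotient of $G$ via the projection $H \times L \to H$, and the class of groups of subexponential growth is closed under quotients \cite{guiv palma}, subexponential growth of $G$ would force subexponential growth of $H$, contradicting the hypothesis; hence $G$ has exponential growth. Similarly $L \cong G/H$ is a quotient of $G$, and since the class of Hermitian groups is closed under quotients \cite[Theorem 12.5.18]{palmer palma}, $G$ being Hermitian would make $L$ Hermitian, contrary to hypothesis. Note that one must argue through quotients here rather than subgroups, since $L = \{e\} \times L$ is closed but \emph{not} open in $G$, so the open-subgroup closure property does not apply.

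The substance of the statement is quasi-symmetry. Given $f \in C_c(G)$, I would first reduce to a finite extension of $H$. The support of $f$ is compact, so its image under the projection onto the discrete factor $L$ is a finite set $S \subseteq L$; since $L$ is locally finite, the subgroup $L_0 := \langle S \cup S^{-1} \rangle$ generated by this finite set is finite. Then $f$, $f^*$ and $f^* * f$ are all supported in the open subgroup $K := H \times L_0$ (their images in $L$ lie in $S$, $S^{-1}$, and $S^{-1}S \subseteq L_0$ respectively), so $f^* * f \in C_c(K) \subseteq L^1(K)$. The group $K$ fits into the extension $1 \to H \to K \to L_0 \to 1$ with $H$ Hermitian and $L_0$ finite, so the finite-extension closure property \cite[Theorem 12.5.18]{palmer palma} shows $K$ is Hermitian, hence symmetric by the Shirali--Ford theorem \cite{shirali ford palma}; therefore $\sigma_{L^1(K)}(f^* * f) \subseteq \mathbb{R}^+_0$.

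It then remains to transport this spectral information from $L^1(K)$ to $L^1(G)$. Because $K$ is open in $G$, extension by zero embeds $L^1(K)$ as a closed $*$-subalgebra of $L^1(G)$, and this extends to a unital inclusion of minimal unitizations $L^1(K)^{\dag} \hookrightarrow L^1(G)^{\dag}$. For an element of a unital subalgebra, the spectrum computed in the larger algebra is contained in that computed in the smaller one (invertibility in the subalgebra persists in the overalgebra), so $\sigma_{L^1(G)}(f^* * f) \subseteq \sigma_{L^1(K)}(f^* * f) \subseteq \mathbb{R}^+_0$. As $f \in C_c(G)$ was arbitrary, $L^1(G)$ is quasi-symmetric.

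I expect the only delicate points to be in the reduction step: checking carefully that local finiteness of $L$ together with compact support of $f$ really permits replacing $L$ by a finite $L_0$, and that $K$ is genuinely an open subgroup so that the isometric $*$-embedding $L^1(K) \hookrightarrow L^1(G)$ and the compatibility of the two unitizations hold. Once this is secured, the spectral containment is immediate and, crucially, runs in the favourable direction — enlarging the ambient algebra can only shrink the spectrum — which is precisely why the conclusion is quasi-symmetry and not the stronger symmetry: the estimate is only available for $f^* * f$ with $f$ having compact (hence, here, essentially ``finite-in-$L$'') support, and $L^1(G)$ itself need not be symmetric since $L$ is not Hermitian.
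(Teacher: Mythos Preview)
Your proof is correct and follows essentially the same route as the paper: reduce to the open subgroup $H \times L_0$ for a finite $L_0 \subseteq L$, use that finite extensions of Hermitian groups are Hermitian, and then invoke the spectral containment $\sigma_{L^1(G)} \subseteq \sigma_{L^1(K)}$ for an open subgroup $K$. Your treatment is in fact slightly more careful than the paper's in spelling out the unitization argument for the spectral inclusion and in observing that the non-Hermitianess of $G$ must be deduced from $L$ as a quotient rather than as a (non-open) subgroup.
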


\begin{proof}
 Let us first prove that $G:=H \times L$ has a quasi-symmetric group algebra. Given a function $f \in C_c(G)$, the product $f^**f$ also has compact support, and since $L$ is discrete, the support of $f^**f$ must lie inside some set of the form $H \times F$, where $F \subseteq L$ is a finite set. Since $L$ is locally finite, $F$ generates a finite subgroup $\langle F \rangle \subseteq G$. Now $H \times \langle F \rangle$ is an open subgroup of $G$, so that
\begin{equation*}
 L^1(H \times \langle F \rangle ) \subseteq L^1(G)\,.
\end{equation*}
The group $H \times \langle F \rangle$ is Hermitian, being a finite extension of a Hermitian group, and therefore $\sigma_{L^1(H \times \langle F \rangle )}(f^**f) \subseteq \mathbb{R}_0^+$. This implies that
\begin{equation*}
\sigma_{L^1(G)}(f^**f) \subseteq \sigma_{L^1(H \times \langle F \rangle )}(f^**f) \subseteq \mathbb{R}_0^+\,,
\end{equation*}
which shows that $G$ is quasi-symmetric.

This group is not Hermitian, because it has a quotient ($L$) which is not Hermitian, and it does not have subexponential growth because it has a quotient ($H$) which does not have subexponential growth. \qed
\end{proof}

Since in the present work we are directly concerned with totally disconnected groups (because of the Schlichting completion), it would be interesting to know if there are examples of totally disconnected groups with a quasi-symmetric group algebra, but which are not Hermitian nor have subexponential growth. We do not know the answer to this question. The example considered in Proposition \ref{H times L example palma} is of course not totally disconnected since $\mathrm{Aff}(\mathbb{R})$ is a connected group. But in view of Proposition \ref{H times L example palma}, it would suffice to answer affirmatively the following more fundamental problem:

\begin{question}
\label{open question on Her tot disc exp growth palma}
 Is there any Hermitian, totally disconnected group, with exponential growth?
\end{question}

As we pointed out above, there are examples of locally compact groups (even connected ones) which are Hermitian and have exponential growth, such as $\mathrm{Aff}(\mathbb{R})$, but the question of whether this can happen in the totally disconnected setting is, as far as we understand, still open. In the discrete case, Palmer \cite{palmer palma} claims that all examples of discrete groups which are known to be Hermitian actually have subexponential growth (even more, polynomial growth).

An affirmative answer to question \ref{open question on Her tot disc exp growth palma} would make, as we pointed out, the class of groups with a quasi-symmetric group algebra richer than the union of the classes of Hermitian and subexponential growth groups.

 On the other side, a negative answer to the above question would mean that any Hermitian totally disconnected group necessarily has subexponential growth and is therefore amenable, and thus would bring new evidence for the long standing conjecture that all Hermitian groups are amenable (\cite{palmer palma}), which is known to be true in the connected case \cite[Theorem 12.5.18 (e)]{palmer palma}. In fact, a negative answer to \ref{open question on Her tot disc exp growth palma} in the discrete case alone would, through the theory of extensions, imply that all Hermitian groups with an open connected component are amenable.

 The fact that we do not know of any totally disconnected group with a quasi-symmetric group algebra which does not have subexponential growth is not a drawback in any way. In fact, the class of groups with subexponential growth is already very rich by itself and will be used to give meaningful examples in Hecke $C^*$-algebra theory and Hall's equivalence in the next section.

\section{Hall's equivalence}
\label{Halls equivalence section palma}

Combining the results of \cite{palma} on the existence of $C^*(G, \Gamma)$ and the isomorphism $C^*(G, \Gamma) \cong C^*(L^1(G, \Gamma))$, with the results on this paper on groups of subexponential growth and also Tzanev's theorem, we are able to establish that
\begin{align*}
 C^*(G, \Gamma) \cong C^*(L^1(G, \Gamma)) \cong pC^*(\overline{G})p \cong  C^*_r(G, \Gamma)\,,
\end{align*}
for several classes of Hecke pairs, including all Hecke pairs $(G, \Gamma)$ where $G$ is a nilpotent group. As consequence, Kaliszewski, Landstad and Quigg's theorem (Theorem \ref{Kal, Land, Quigg thm on Halls equivalence and C completions palma} in the present work) yields that Hall's equivalence is satisfied for all such classes of Hecke pairs.

\begin{proposition}
 If a group $G$ satisfies one of the following generalized nilpotency properties:
\begin{itemize}
 \item $G$ is finite-by-nilpotent, or
 \item $G$ is hypercentral, or
 \item all subgroups of $G$ are subnormal,
\end{itemize}
then for any Hecke subgroup $\Gamma \subseteq G$ we have that $C^*(G, \Gamma)$ exists and
\begin{equation*}
 C^*(G, \Gamma) \cong C^*(L^1(G, \Gamma)) \cong p C^*(\overline{G})p \cong C^*_r(G, \Gamma)\,.
\end{equation*}
In particular, Hall's equivalence holds with respect to any Hecke subgroup.
\end{proposition}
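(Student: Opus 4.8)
The plan is to split the four-term isomorphism chain into three separate links and supply each from a different source. The first link, namely the existence of $C^*(G,\Gamma)$ together with the isomorphism $C^*(G,\Gamma) \cong C^*(L^1(G,\Gamma))$, is exactly what the earlier results of \cite{palma} provide for each of the three generalized nilpotency classes, so for that link I would simply invoke \cite{palma}. The remaining two links, $C^*(L^1(G,\Gamma)) \cong pC^*(\overline{G})p$ and $pC^*(\overline{G})p \cong C^*_r(G,\Gamma)$, are packaged together in Corollary \ref{subexponential growth imply completions of Hecke algebra cor palma}, whose hypothesis only requires that one of $G$, $G_r$ or $\overline{G}$ have subexponential growth. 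Thus the entire proof reduces to the single assertion that each of the three conditions on $G$ forces $G$ to have subexponential growth as a discrete group; once that is in hand, Corollary \ref{subexponential growth imply completions of Hecke algebra cor palma} supplies the last two links and the chain closes.

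The core step, then, is to verify subexponential growth. For the hypercentral case and the case in which every subgroup of $G$ is subnormal, I would use the group-theoretic fact that both classes are contained in the class of locally nilpotent groups; since the preliminaries record (following \cite[Theorem 12.5.17]{palmer palma}) that every locally nilpotent group has subexponential growth, these two cases follow at once. The finite-by-nilpotent case needs slightly more care, because such a group need not be locally nilpotent. Here I would argue directly that subexponential growth is inherited through a finite normal subgroup: if $N \trianglelefteq G$ is finite with $G/N$ nilpotent and $V \ni e$ is any finite subset, then the quotient map $\pi:G\to G/N$ sends $V$ to a finite generating set of a finitely generated nilpotent, hence polynomially growing, group, and the counting estimate $|V^n| \le |N|\,|\pi(V)^n|$ gives $\limsup_n |V^n|^{1/n} = 1$. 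Hence finite-by-nilpotent groups are subexponential as well.

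With subexponential growth established in all three cases, Corollary \ref{subexponential growth imply completions of Hecke algebra cor palma} yields $C^*(L^1(G,\Gamma)) \cong pC^*(\overline{G})p \cong C^*_r(G,\Gamma)$, and combining this with the first link from \cite{palma} produces the full chain $C^*(G,\Gamma) \cong C^*(L^1(G,\Gamma)) \cong pC^*(\overline{G})p \cong C^*_r(G,\Gamma)$, including the existence of $C^*(G,\Gamma)$. Finally, Hall's equivalence follows from part (1) of Theorem \ref{Kal, Land, Quigg thm on Halls equivalence and C completions palma}: the existence of $C^*(G,\Gamma)$ together with the isomorphisms $C^*(G,\Gamma) \cong C^*(L^1(G,\Gamma)) \cong pC^*(\overline{G})p$ is precisely equivalent to every $^*$-representation of $\hpalma(G,\Gamma)$ being $\langle\rangle_R$-positive, which is the condition guaranteeing the desired category equivalence. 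I expect the only genuinely delicate point to be the group-theoretic input of the second paragraph, specifically the facts that hypercentral groups and groups with all subgroups subnormal are locally nilpotent (the latter being a nontrivial theorem) and that subexponential growth passes through a finite normal subgroup; the $C^*$-algebraic assembly is then routine given the cited results.
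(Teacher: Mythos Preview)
Your proposal is correct and follows essentially the same route as the paper: invoke \cite{palma} for the first link, establish subexponential growth of $G$ in each of the three cases, and then apply Corollary~\ref{subexponential growth imply completions of Hecke algebra cor palma} for the remaining two links. The only cosmetic difference is in the finite-by-nilpotent case, where the paper simply remarks that nilpotent groups have subexponential growth and that this passes through a finite normal subgroup, whereas you spell out the counting estimate $|V^n|\le |N|\,|\pi(V)^n|$ explicitly; the underlying argument is the same.
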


\begin{proof} As discussed in \cite[Classes 5.8, 5.9, 5.5]{palma} for every Hecke pair $(G, \Gamma)$ where $G$ satisfies one of the aforementioned properties we have that the full Hecke $C^*$-algebra exists and $C^*(G, \Gamma) \cong C^*(L^1(G, \Gamma))$.

We claim that if $G$ has one of the three properties above, it must have subexponential growth. If $G$ is finite-by-nilpotent, then by definition $G$ is a nilpotent extension of a finite group, and since nilpotent groups have subexponential growth, then so does $G$. If $G$ is hypercentral or all subgroups of $G$ are subnormal, then it is known that $G$ is locally nilpotent and therefore must have subexponential growth (see \cite{hul2 palma}). Consequently, by Corollary \ref{subexponential growth imply completions of Hecke algebra cor palma} we must have $C^*(L^1(G,\Gamma)) \cong pC^*(\overline{G})p \cong C^*_r(G, \Gamma)$. \qed
\end{proof}

If we restrict ourselves to finite subgroups $\Gamma \subseteq G$ we get a similar result for other classes of groups:

\begin{proposition}
 If a group $G$ satisfies one of the following properties:
\begin{itemize}
 \item $G$ is an $FC$-group, or
 \item $G$ is locally nilpotent, or
 \item $G$ is locally finite,
\end{itemize}
then for any finite subgroup $\Gamma \subseteq G$ we have that $C^*(G, \Gamma)$ exists and
\begin{equation*}
 C^*(G, \Gamma) \cong C^*(L^1(G, \Gamma)) \cong p C^*(\overline{G})p \cong C^*_r(G, \Gamma)\,.
\end{equation*}
In particular, Hall's equivalence holds with respect to any finite subgroup.
\end{proposition}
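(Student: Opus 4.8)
The plan is to decouple the statement into two independent halves, exactly as in the previous proposition, and then glue them together. The first half asserts that $C^*(G,\Gamma)$ exists and that $C^*(G,\Gamma) \cong C^*(L^1(G,\Gamma))$; the second half is the chain $C^*(L^1(G,\Gamma)) \cong pC^*(\overline{G})p \cong C^*_r(G,\Gamma)$. I would obtain the first half directly from \cite{palma}, where the relevant classes of Hecke pairs (with $G$ an $FC$-group, locally nilpotent, or locally finite, and $\Gamma$ finite) are treated and shown to satisfy both the existence of the full Hecke $C^*$-algebra and its coincidence with $C^*(L^1(G,\Gamma))$. It is precisely here that the hypothesis ``$\Gamma$ finite'' enters: for these three properties the results of \cite{palma} yielding existence of $C^*(G,\Gamma)$ and the isomorphism $C^*(G,\Gamma) \cong C^*(L^1(G,\Gamma))$ are available only for finite Hecke subgroups, in contrast with the previous proposition.

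For the second half I would reduce everything to Corollary \ref{subexponential growth imply completions of Hecke algebra cor palma}, whose hypothesis requires only that one of $G$, $G_r$ or $\overline{G}$ have subexponential growth. The key point is therefore to check that each of the three listed properties forces $G$ itself to have subexponential growth. For locally nilpotent groups and $FC$-groups this is exactly \cite[Theorem 12.5.17]{palmer palma}, already quoted in the preliminaries. For locally finite groups I would argue directly in the discrete topology: any finite set $V \subseteq G$ with $e \in V$ generates a finite subgroup $\langle V\rangle$, so that $V^n \subseteq \langle V\rangle$ for every $n$ and hence $\limsup_{n\to\infty} |V^n|^{1/n} \leq \limsup_{n\to\infty} |\langle V\rangle|^{1/n} = 1$ — the same mechanism used in the proof of Proposition \ref{H times L example palma}. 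With subexponential growth of $G$ in hand, Corollary \ref{subexponential growth imply completions of Hecke algebra cor palma} delivers $C^*(L^1(G,\Gamma)) \cong pC^*(\overline{G})p \cong C^*_r(G,\Gamma)$.

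Combining the two halves gives the full chain of four isomorphisms, and Hall's equivalence then follows at once: since $C^*(G,\Gamma)$ exists and $C^*(G,\Gamma) \cong C^*(L^1(G,\Gamma)) \cong pC^*(\overline{G})p$, Theorem \ref{Kal, Land, Quigg thm on Halls equivalence and C completions palma}(1) shows that every $^*$-representation of $\hpalma(G,\Gamma)$ is $\langle\rangle_R$-positive, whence the category equivalence of the preceding discussion applies. The main obstacle is really the first half: the subexponential-growth input is routine once the locally finite case is disposed of by the elementary argument above, whereas the existence of $C^*(G,\Gamma)$ and its identification with $C^*(L^1(G,\Gamma))$ rest on the class-by-class analysis of \cite{palma}, and it is exactly this dependence that restricts the statement to finite $\Gamma$.
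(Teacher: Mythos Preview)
Your proposal is correct and follows essentially the same approach as the paper: invoke \cite{palma} for the existence of $C^*(G,\Gamma)$ and the isomorphism $C^*(G,\Gamma)\cong C^*(L^1(G,\Gamma))$ for these three classes with finite $\Gamma$, then observe that each property implies subexponential growth of $G$ and apply Corollary~\ref{subexponential growth imply completions of Hecke algebra cor palma}. The only cosmetic differences are that the paper cites \cite{hul2 palma} rather than \cite{palmer palma} for the subexponential growth of $FC$- and locally nilpotent groups, and simply declares the locally finite case ``obvious'' where you spell out the one-line argument.
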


\begin{proof}
 As discussed in \cite[Classes 5.10, 5.11, 5.12]{palma} for every group $G$ that satisfies one of the aforementioned properties we have that, for any finite subgroup $\Gamma$, the full Hecke $C^*$-algebra exists and we have $C^*(G, \Gamma) \cong C^*(L^1(G, \Gamma))$. Also if $G$ has one of the three properties above, it must have subexponential growth (for $FC$- and locally nilpotent groups see \cite{hul2 palma}, and for locally finite groups it is obvious). Consequently, by Corollary \ref{subexponential growth imply completions of Hecke algebra cor palma} we must have $C^*(L^1(G,\Gamma)) \cong pC^*(\overline{G})p \cong C^*_r(G, \Gamma)$. \qed
\end{proof}

\begin{remark}
 The results above show that Hall's equivalence holds for any Hecke pair $(G, \Gamma)$ where $G$ satisfies a certain generalized nilpotency property. An analogous result for the class of solvable groups cannot hold. In \cite[Example 3.4]{tzanev palma} Tzanev gave an example of a Hecke pair $(G, \Gamma)$ where $G$ is solvable but for which $C^*(G, \Gamma)$ does not exist, and consequently Hall's equivalence does not hold. The example consists of the infinite dihedral group $G := \mathbb{Z} \rtimes (\mathbb{Z} / 2 \mathbb{Z})$ together with $\Gamma := \mathbb{Z} / 2\mathbb{Z}$.
\end{remark}

\section{A counter-example}
\label{counter-example section palma}

In the previous sections we have established a sufficient condition for the isomorphism $C^*(L^1(G, \Gamma)) \cong pC^*(\overline{G})p$ to hold, namely whenever $\overline{G}$ has a quasi-symmetric group algebra. A natural question to ask is the following: is it even possible that $C^*(L^1(G, \Gamma)) \ncong pC^*(\overline{G})p$ ?  We will now show that $C^*(L^1(G, \Gamma)) \ncong pC^*(\overline{G})p$ for the Hecke pair $(PSL_2(\mathbb{Q}_q), PSL_2(\mathbb{Z}_q))$,
where $q$ denotes a prime number and $\mathbb{Q}_q$, $\mathbb{Z}_q$ denote respectively the field of $q$-adic numbers and the ring of $q$-adic integers. It was already asked in \cite[Example 10.8]{schl palma} if $C^*(L^1(G, \Gamma)) \ncong pC^*(\overline{G})p$ for this Hecke pair and a strategy to achieve this result was designed. Our approach is nevertheless different from the approach suggested in \cite{schl palma} since we make no use of the representation theory of $PSL_2(\mathbb{Q}_q)$.

As we remarked in the introduction, Tzanev has claimed that the Hecke pair $(PSL_3(\mathbb{Q}_q), PSL_3(\mathbb{Z}_q))$ gives another example, but no proof has been published.

\begin{theorem}
 Let $q$ be a prime number and $\mathbb{Q}_q$ and $\mathbb{Z}_q$ denote respectively the field of $q$-adic numbers and the ring of $q$-adic integers. For the Hecke pair $(G, \Gamma):=(PSL_2(\mathbb{Q}_q), PSL_2(\mathbb{Z}_q))$ we have that $C^*(L^1(G, \Gamma)) \ncong pC^*(G)p$ .
\end{theorem}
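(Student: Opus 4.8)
The plan is to apply the criterion of Kaliszewski, Landstad and Quigg recorded in Theorem \ref{Kal, Land, Quigg thm on Halls equivalence and C completions palma}(2). Since $\Gamma = PSL_2(\mathbb{Z}_q)$ is a compact open subgroup of the totally disconnected group $G = PSL_2(\mathbb{Q}_q)$ and the pair is reduced, its Schlichting completion is $\overline{G} = G$, so it is enough to exhibit a single $^*$-representation of $L^1(G,\Gamma) \cong pL^1(G)p$ that fails to be $\langle\cdot,\cdot\rangle_R$-positive. I would work entirely with the Bruhat--Tits tree $T$ of $G$, a $(q+1)$-regular tree on which $G$ acts preserving the bipartition $T = T_0 \sqcup T_1$, with $\Gamma$ the stabiliser of a base vertex $o \in T_0$. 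Because $G$ is transitive on $T_0$ and $\Gamma$ is transitive on each even sphere, the double cosets are indexed by the even distances $0,2,4,\dots$; in particular $\hpalma(G,\Gamma)$ is commutative and equals the polynomial algebra $\mathbb{C}[b]$ generated by the self-adjoint characteristic function $b$ of the distance-$2$ double coset. Hence the $^*$-representations of $L^1(G,\Gamma)$ decompose over its $^*$-characters, which are exactly the bounded spherical functions $\phi$ (radial functions on $T_0$ with $\phi(o)=1$ that are eigenfunctions of $b$), and $\chi_\phi$ is $\langle\cdot,\cdot\rangle_R$-positive precisely when $\phi$ is positive definite. The whole problem therefore reduces to producing a bounded spherical function that is not positive definite.

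I would construct such a $\phi$ directly, with no recourse to the unitary representation theory of $G$. Writing $\psi(m) = \phi(2m)$, the eigenvalue equation $b\phi = \nu\phi$ becomes, for $m \geq 1$, the three-term recursion $q^2\psi(m+1) - (\nu - q + 1)\psi(m) + \psi(m-1) = 0$, with boundary relation $\nu\psi(0) = q(q+1)\psi(1)$. Studying the characteristic equation $q^2 r^2 - (\nu - q + 1)r + 1 = 0$ (whose roots multiply to $q^{-2}$) shows that the spherical solution is bounded exactly for $\nu \in [-(q^2-q+2),\,q^2+q]$: for $\nu < -(q+1)$ both roots are real and negative and remain of modulus $\le 1$ until $\nu$ reaches $-(q^2-q+2)$, where $r = -1$ becomes a root. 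Thus for every $\nu \in [-(q^2-q+2),\,-(q+1))$ one obtains a genuine bounded $^*$-character $\chi_\phi$ of $L^1(G,\Gamma)$. The conceptual reason this range is available for $PSL_2$ but not for $PGL_2$ is that $G$ preserves the bipartition of $T$: the distance-$1$ operator $c_1$ (the tree adjacency) does not belong to $\hpalma(G,\Gamma)$, so the identity $c_1^* * c_1 = b + (q+1)c_0$ that holds for $PGL_2$—and which forces $b \ge -(q+1)$ in every representation—has no counterpart here.

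The remaining step, which I expect to be the main obstacle, is to show that these spherical functions with $\nu < -(q+1)$ are not positive definite. The cleanest comparison is with the positive-definite side: the tempered spherical spectrum is the spectrum of $b$ on $\ell^2(T_0)$, namely $[-(q+1),\,3q-1]$ (computed from $\|A\|_{\ell^2(T)} = 2\sqrt{q}$ and $b = A^2 - (q+1)$), while every non-tempered positive-definite spherical function (complementary series together with the trivial one) lies above $3q-1$; hence positive-definite spherical functions satisfy $\nu \ge -(q+1)$. I would establish this lower bound intrinsically, either through the combinatorial characterisation of positive-definite radial functions on a homogeneous tree, or, more self-containedly, by exhibiting an explicit finite configuration of vertices in $T_0$—for instance $o$ together with its distance-$2$ sphere, carrying an optimised weight at $o$—on which the Gram matrix $[\phi(d(x_i,x_j))]$ fails to be positive semidefinite as soon as $\nu < -(q+1)$; the entries $\psi(1),\psi(2)$ needed for this are supplied by the recursion above.

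Since $-(q^2-q+2) < -(q+1)$ for every prime $q$ (the difference is $(q-1)^2$), the characters $\chi_\phi$ with $\nu \in [-(q^2-q+2),\,-(q+1))$ are bounded $^*$-representations of $L^1(G,\Gamma)$ that are not $\langle\cdot,\cdot\rangle_R$-positive. Theorem \ref{Kal, Land, Quigg thm on Halls equivalence and C completions palma}(2) then yields $C^*(L^1(G,\Gamma)) \ncong pC^*(G)p$. I would emphasise that the comparison is between the two completions via the canonical surjection $C^*(L^1(G,\Gamma)) \to pC^*(G)p$, which the above shows is not injective because the character $\chi_\phi$ does not descend to $pC^*(G)p$; this is the correct reading of the non-isomorphism, and it is precisely the failure of positive definiteness in the forbidden spectral interval that drives it.
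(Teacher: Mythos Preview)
Your overall strategy coincides with the paper's: both show that some $L^1$-bounded character of the (commutative) Hecke algebra fails to be $\langle\,,\,\rangle_R$-positive, and both ultimately land on the endpoint character you call $\nu=-(q^2-q+2)$ (the paper's $\pi_{-q}$). Your identification of the spectral intervals via the Bruhat--Tits tree is correct and matches the paper's parametrisation borrowed from \cite{schl palma}. Where the paper diverges is that, instead of arguing abstractly that positive-definite spherical functions satisfy $\nu\ge -(q+1)$, it writes down one explicit $f\in C_c(G)p$ and computes $\pi_{-q}(f^**f)=-(q-1)^2<0$ by hand. In tree language, the paper's $f=P+\sum_{k=0}^{q-1}y_kP$ is the indicator of the $q+1$ vertices in $T_0$ adjacent to a fixed vertex $o'\in T_1$ (the computation $Py_i^{-1}y_jP=Px_1P$ for $i\ne j$ says precisely that these $q+1$ points are pairwise at distance $2$, hence share a common neighbour). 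The resulting Gram sum is $(q+1)\bigl(1+q\,\psi(1)\bigr)$, which is negative exactly when $\nu<-(q+1)$; this is both the explicit witness and, implicitly, the sharp lower bound you were aiming for.

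The one concrete gap in your proposal is the suggested finite test set. Taking $o$ together with its \emph{entire} distance-$2$ sphere $S$ with a constant weight $c_1$ on $S$ and an optimised weight $c_0$ at $o$ does \emph{not} detect the failure: using your recursion one finds
\[
\sum_{v,w\in S}\psi\!\bigl(d(v,w)/2\bigr)=q(q+1)\bigl(1+(q-1)\psi(1)+q^2\psi(2)\bigr)=\nu^2,
\]
so the quadratic form collapses to $c_0^{\,2}+2c_0c_1\nu+c_1^{\,2}\nu^{2}=(c_0+c_1\nu)^2\ge 0$ for every $\nu$. One must break the spherical symmetry: selecting only the $q$ vertices of $S$ that lie beyond a single neighbour of $o$ (equivalently, the $q+1$ neighbours of a point of $T_1$) gives the Gram sum $(q+1)(1+q\psi(1))$, which \emph{is} negative throughout your exceptional range. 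That configuration is exactly the paper's $f$. So your plan works, but the ``for instance'' configuration you wrote down is the one choice that cannot succeed; the correct one is the paper's, and it furnishes the self-contained proof of the bound $\nu\ge -(q+1)$ that you flagged as the main obstacle.
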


\begin{proof}
For ease of reading and so that no confusion arises between the prime number $q$ and the projection $p$, we will throughout this proof denote the projection $p$ by $P$. Thus, our goal is to prove that $C^*(L^1(G, \Gamma)) \ncong PC^*(G)P$.

The pair $(PSL_2(\mathbb{Q}_q), PSL_2(\mathbb{Z}_q))$ coincides with its own Schlichting completion (see \cite{schl palma}) and is the reduction of the pair $(SL_2(\mathbb{Q}_q), SL_2(\mathbb{Z}_q))$. For ease of reading we will work with pair $(SL_2(\mathbb{Q}_q), SL_2(\mathbb{Z}_q))$ in this proof.

 The structure of the Hecke algebra $\hpalma(G, \Gamma)$ is well-known, and for convenience we will mostly refer to Hall \cite[Section 2.1.2.1]{hall palma} whenever we need to. Letting
\begin{equation*}
 x_n :=\begin{pmatrix}
  q^n & 0\\
  0 & q^{-n}
 \end{pmatrix}\,,
\end{equation*}
 it is known (\cite[Prop. 2.9]{hall palma}) that every double coset $\Gamma s \Gamma$ can be uniquely represented as $\Gamma x_n \Gamma$ for some $n \in \mathbb{N}$.

For each $0 \leq k \leq q-1$ let us denote by $y_k \in G$ the matrix
\begin{equation*}
 y_k :=\begin{pmatrix}
  q & k\\
  0 & q^{-1}
 \end{pmatrix}\,,
\end{equation*}
and let us take $g \in L^1(G)P$ as the element $g:= y_0P + y_1P + \dots + y_{q-1}P$, and $f := P + g$. We then have
\begin{eqnarray*}
 f^*f & = &  (P+g)^*(P+g) \;\; = \;\; P + g^*P + Pg +g^*  g\\
& = & P + \sum_{k = 0}^{q-1} Py_k^{-1}P + \sum_{k = 0}^{q-1} Py_kP + \sum_{i,j = 0}^{q-1} P y_i^{-1} y_jP\\
& = & (q+1)P +  \sum_{k = 0}^{q-1} Py_k^{-1}P + \sum_{k = 0}^{q-1} Py_kP + \sum_{\substack{i,j = 0\\ i \neq j}}^{q-1} P y_i^{-1} y_jP\,.
\end{eqnarray*}
As it is know (see for example \cite[Props. 2.10 and 2.12]{hall palma}), in $\hpalma(G, \Gamma)$ the modular function is trivial and each double coset is self-adjoint. Hence we can write
\begin{eqnarray*} 
f^*f & = & (q+1)P +  2\sum_{k = 0}^{q-1} Py_kP + 2\sum_{\substack{i,j = 0\\ i < j}}^{q-1} P y_i^{-1} y_jP\,.
\end{eqnarray*}
We now notice that, from \cite[Prop. 2.9]{hall palma}, we have $\Gamma y_k \Gamma =  \Gamma x_1 \Gamma$, and therefore $Py_kP = Px_1 P$. Moreover, for $0 \leq i < j \leq q-1$, we have that
\begin{equation*}
 y_i^{-1}y_j = \begin{pmatrix}
  1 & (j-i)q^{-1}\\
  0 & 1
 \end{pmatrix}\,,
\end{equation*}
and again from \cite[Prop. 2.9]{hall palma} we conclude that $Py_i^{-1}y_jP = Px_1P$. Hence, we get
\begin{eqnarray*}
 f^*f & = & (q+1)\,P +  2q\, Px_1P + 2\frac{(q-1)q}{2}\, P x_1P\\
 & = & (q+1) \,P + (q^2 + q)\,Px_1P\,.
\end{eqnarray*}
It is well known that $\hpalma(G, \Gamma)$ is commutative (see for example \cite[Section 2.2.3.2]{hall palma}) and all of its characters have been explicitly described. Following \cite[Example 10.8]{schl palma} the characters of $\hpalma(G, \Gamma)$ are precisely all the functions $\pi_z : \hpalma(G, \Gamma) \to \mathbb{C}$ such that
\begin{equation*}
 \pi_z(Px_mP) = \frac{1-qz}{(q+1)(1-z)} \Big(\frac{z}{q}\Big)^m + \frac{q-z}{(q+1)(1-z)}\Big(\frac{1}{qz}\Big)^m\,,
\end{equation*}
for a given complex number $z \in \mathbb{C} \backslash \{1\}$ (the expression for $\pi_1$ is different and the reader should check \cite[Example 10.8]{schl palma} for the correct definition, but we will not need it here). Kaliszewski, Landstad and Quigg \cite[Example 10.8]{schl palma} have also determined that the characters $\pi_z$ which extend to $^*$-representations of $L^1(G, \Gamma)$ are precisely those with $z \in [-q, -1/q] \cup[1/q, q]$.

We will now consider the $^*$-representation $\pi_{-q}$ of $L^1(G, \Gamma)$ and show that $\pi_{-q}(f^* f) < 0$. First we notice that
\begin{eqnarray*}
 \pi_{-q}(Px_1P)  & = & \frac{1-q(-q)}{(q+1)(1-(-q))} \Big(\frac{-q}{q}\Big) + \frac{q-(-q)}{(q+1)(1-(-q))}\Big(\frac{1}{q(-q)}\Big)\\
& = & - \frac{1+q^2}{(q+1)^2} - \frac{2}{(q+1)^2q}\\
& = & - \frac{q^3 + q +2}{(q+1)^2q}\,.
\end{eqnarray*}
Hence we get
\begin{eqnarray*}
 \pi_{-q}(f^*f) & = & \pi_{-q}\big( (q+1) \,P + (q^2 +q)\,Px_1P \big)\\
 & = & q+1 - (q^2 +q)\frac{q^3 + q +2}{(q+1)^2q}\\
 & = & q+1 - \frac{q^3 + q +2}{q+1}\,.
\end{eqnarray*}
To prove that $\pi_{-q}(f^*f) < 0$ is then equivalent to show that $(q+1)^2 < q^3 + q +2$, or equivalently, $0 < q^3 - q^2 - q +1$, for any prime number $q$. This follows from an elementary calculus argument as follows: letting $F(x) = x^3-x^2-x+1$, we have that $F''(x) = 6x - 2$ is always greater than $0$ for $x \geq 2$ (the first prime number). Hence, $F'(x) = 3x^2-2x-1$ is growing for $x \geq 2$. Since $F'(2) > 0$, it follows that $F'(x)$ is always greater than $0$ for $x \geq 2$. Thus, $F(x)$ is growing in this interval, and since $F(2) >0$, it follows that $F(q) > 0$, for any prime $q$.

Since $\pi_{-q}(f^*f) < 0$  it then follows that not all representations of $L^1(G, \Gamma)$ are $\langle \rangle_R$-positive and consequently $C^*(L^1(G, \Gamma)) \ncong PC^*(G)P$. \qed
\end{proof}

As a particular consequence of the above theorem, it follows that $PSL_2(\mathbb{Q}_q)$ does not have a quasi-symmetric group algebra. Also, together with Hall's result \cite[Proposition 2.21]{hall palma} and the fact that $PSL_2(\mathbb{Q}_q)$ is not amenable, we can say that for this Hecke pair $C^*(G, \Gamma)$ does not exist and $C^*(L^1(G, \Gamma)) \ncong pC^*(\overline{G})p \ncong C^*_r(G, \Gamma)$.

As we have seen in this chapter, the isomorphism $C^*(L^1(G, \Gamma)) \cong pC^*(\overline{G})p$ holds whenever $G$ ( $G_r$ or $\overline{G}$) has subexponential growth. We would like know if the same is true or if one counter-example can be found for the class of amenable groups:

\begin{question}
If $\overline{G}$ is amenable does it follow that $C^*(L^1(G, \Gamma)) \cong pC^*(\overline{G})p$?
\end{question}

\end{document}